\theoremstyle{plain} 
\newtheorem*{mainthm}{Main Theorem}
\newtheorem{lemma}{Lemma}[section]
\newtheorem{thm}[lemma]{Theorem}
\theoremstyle{definition}
\newtheorem{question}[lemma]{Question}
\theoremstyle{remark}
\newcommand{\N}{\mathbb{N}}
\newcommand{\Q}{\mathbb{Q}}
\newcommand{\st}{\::\:}
\newcommand{\pst}{\:|\:}
\DeclareMathOperator{\lcm}{lcm}
\title{On disjoint unions of finitely many copies of the free monogenic semigroup}
\author{N. Abu-Ghazalh\thanks{The first author is financially supported by The Ministry of Higher Education in Saudi Arabia (Princess Nora Bint Abdul Rahman University in Riyadh, Ref number RUG0003).},  N. Ru\v{s}kuc} 
\begin{document}

\maketitle

\begin{abstract}
Every semigroup which is a finite disjoint union of copies of the free monogenic semigroup (natural numbers under addition) is finitely presented and residually finite.

\bigskip

\noindent
\textit{2000 Mathematics Subject Classification}: 20M05.
\end{abstract}

\section{Introduction} 
\label{sec1}

Unlike the 'classical' algebraic structures, such as groups and rings, it is well known that a semigroup may decompose into a disjoint union of subsemigroups.
Indeed many structural theories of semigroups have such decompositions at their core.
For example: 

\begin{itemize}[label=\textbullet, leftmargin=*]
\item
Every completely simple semigroup is isomorphic to a Rees matrix semigroup over a group $G$, and is thus a disjoint union of copies of $G$; see \cite[Theorem 3.3.1]{howie95}.
\item
Every Clifford semigroup is isomorphic to a strong semilattice of groups, and is thus a disjoint union of its maximal subgroups; see \cite[Theorem 4.2.1]{howie95}.
\item
Every commutative semigroup is a disjoint union (indeed a semilattice) of archimedean commutative semigroups; see \cite[Theorem 4.2.2]{grillet95}.
\end{itemize}

It is therefore natural to ask how properties of a semigroup $S$ which can be decomposed into 
a disjoint union of subsemigroups $S=T_1\cupdot \dots \cupdot T_n$ depend on properties of the $T_i$.
For instance, it is obvious that if all $T_i$ are finitely generated then so is $S$.
Araujo et al. \cite{araujo01} discuss finite presentability in this context, and show that there exists a non-finitely presented semigroup which is a disjoint union of two finitely presented subsemigroups.
On the other hand, it can be shown that in many special instances finite presentability of the $T_i$ implies
finite presentability of $S$. For example, this is the case when all $T_i$ are groups (i.e. when $S$ is a completely regular semigroup with finitely many idempotents; see \cite[Theorem 4.1.1]{howie95}); this follows from \cite[Theorem 4.1]{ruskuc99}.
Further such instances are discussed in \cite{araujo01}.

Turning to the finiteness condition of residual finiteness, we have a similar landscape.
It is easy to construct a non-residually finite semigroup which is a disjoint union of two residually finite subsemigroups. One such example, consisting of a free group and a zero semigroup, can be found in
\cite[Example 5.6]{gray12}.

On the other hand, it follows from Golubov \cite{golubov75} that if all $T_i$ are residually finite groups then $S$ is residually finite as well.

In this paper we consider semigroups which are disjoint unions of finitely many copies of the free monogenic semigroup. 
Throughout the paper we will denote this semigroup by $N$; hence $N$ is a multiplicative isomorphic copy of the additive semigroup $\N$ of natural numbers.
We show that even though there is no general structural theory for such semigroups, which would yield positive results of the above type `for free', they nonetheless display the same behaviour as unions of groups:

\begin{mainthm}
Every semigroup which is a finite disjoint union of copies of the free monogenic semigroup is finitely presented and residually finite.
\end{mainthm} 

We remark that it immediately follows from general theory that all such semigroups have decidable word problem and are hopfian. In fact, our proof of finite presentability provides an explicit solution to the word problem.

Subsemigroups of the free monogenic semigroups, and in particular the so called \emph{numerical semigroups}
(subsemigroups of finite complement) have been subject to extensive investigation over the years; see \cite{rosales09} for a comprehensive introduction. In a sense in this paper we take a complementary view-point: instead of looking at subsemigroups of $N$, we investigate semigroups which are `composed' of finitely many copies of $N$.

The paper is organised as follows. In Section \ref{sec2} we undertake an analysis of multiplication
in a semigroup under investigation, exhibiting certain strong regularities which are all described in terms of arithmetic progressions. These results are utilised to prove the finite presentability part of the Main Theorem in Section \ref{sec3} and residual finiteness in Section \ref{sec4}. Finally in Section \ref{sec5} we pose some questions which in our opinion point the way for interesting future investigations.

\section{Preliminaries: multiplication and arithmetic progressions}
\label{sec2}

Let $S$ be a semigroup which is a disjoint union of $n$ copies of the free monogenic semigroup:
\[
S=\bigcupdot_{a\in A} N_a,
\]
where $A$ is a finite set and $N_a=\langle a\rangle$ for $a\in A$.
In this section we gather some background facts about $S$. The common feature is that they all elucidate a strong regularity with which elements of $S$ multiply. We begin with two preliminary lemmas.

\begin{lemma}
\label{lemma1}
 Let $a\in A$ and $q\in\N$ be fixed. There can be only finitely many elements $x\in S$ such that 
$a^p x=a^{p+q}$ for some $p\in\N$.
\end{lemma}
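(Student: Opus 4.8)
The plan is to prove the stronger assertion that for each generator $b\in A$ there is at most one element of $N_b$ in the set
\[
X=\{\,x\in S \st a^px=a^{p+q}\ \text{for some }p\in\N\,\};
\]
since $A$ is finite this yields $\lvert X\rvert\le\lvert A\rvert$. First I would record that the defining condition propagates upwards in $p$: multiplying $a^px=a^{p+q}$ on the left by $a$ gives $a^{p+1}x=a^{p+q+1}$, so if $x\in X$ then $a^jx=a^{j+q}$ for \emph{all} sufficiently large $j$. Writing an arbitrary $x\in S$ as $x=b^m$ with $b\in A$ and $m\in\N$, the task reduces to bounding the number of exponents $m$ for which $a^jb^m=a^{j+q}$ holds for all large $j$.

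The key leverage is that left multiplication by $a$ acts on $N_a$ as the shift $a^i\mapsto a^{i+1}$. For a fixed $m$ the set $\{\,j\st a^jb^m\in N_a\,\}$ is upward closed, since $a^jb^m\in N_a$ forces $a^{j+1}b^m=a\cdot(a^jb^m)\in N_a$; hence it is either empty or cofinite in $\N$. Let $M_b$ be the set of $m$ for which it is cofinite. For $m\in M_b$ write $a^jb^m=a^{h(j)}$ for large $j$; then $a^{h(j+1)}=a^{j+1}b^m=a\cdot a^{h(j)}=a^{h(j)+1}$, whence $h(j+1)=h(j)+1$, so $h(j)=j+c_m$ for a constant integer $c_m$ and all large $j$. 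In these terms $b^m\in X$ if and only if $m\in M_b$ and $c_m=q$.

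It then remains to see that $c_m=q$ has at most one solution. For $m<m'$ in $M_b$ one computes, for all large $j$,
\[
a^{j+c_m}b^{\,m'-m}=a^jb^m\cdot b^{\,m'-m}=a^jb^{m'}=a^{j+c_{m'}},
\]
so, as $j+c_m$ runs through all large integers, $m'-m\in M_b$ with $c_{m'-m}=c_{m'}-c_m$; a symmetric computation shows $M_b$ is closed under addition with $c$ additive. Consequently $M_b=g_b\N$ for $g_b=\min M_b$ and $c_{kg_b}=k\,c_{g_b}$, so the equation $c_m=q$ with $q\ge1$ has at most one solution. Summing over $b\in A$ finishes the proof. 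I expect the main obstacle to be exactly this slope-one observation together with the additivity of $c$ on $M_b$: the whole argument hinges on the fact that, although $S$ itself need not be cancellative, the restriction of left multiplication by $a$ to $N_a$ is, and this rigidity dictates how the products $a^jb^m$ are allowed to re-enter $N_a$.
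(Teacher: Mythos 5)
Your proof is correct, and it establishes the same key intermediate fact as the paper --- at most one element of the set $X$ lies in each block $N_b$, whence $|X|\le |A|$ --- but by a noticeably different route. The paper gets this in three lines by pigeonhole and contradiction: if $c^r,c^s\in X$ with $r\neq s$, it evaluates $a^pc^{rs}$ in two ways, as $a^p(c^r)^s=a^{p+sq}$ and as $a^p(c^s)^r=a^{p+rq}$, contradicting freeness of $N_a$. In your notation this is precisely the additivity $c_{m+m'}=c_m+c_{m'}$ specialised to $c_r=c_s=q$, giving $c_{rs}=sq=rq$; so the decisive mechanism --- shifts add when powers of $b$ are composed, and exponents inside $N_a$ can be compared exactly --- is the same in both arguments. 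What you do differently is to classify everything first: the cofiniteness of $\{\,j\st a^jb^m\in N_a\,\}$, the slope-one identity $h(j)=j+c_m$, closure of $M_b$ under sums and positive differences with $c$ additive, hence $M_b=g_b\N$ with $c_{kg_b}=kc_{g_b}$, from which uniqueness of a solution of $c_m=q$ is immediate. This is heavier machinery than the lemma needs, but it buys strictly more: it is an embryonic form of the arithmetic-progression regularity that the paper only develops later (Lemmas \ref{lemma4} and \ref{lemma5}), and it even shows that in this particular situation the exceptional finite part of the progression is empty. Two steps you compress deserve a line each: the deduction $M_b=g_b\N$ requires the short Euclidean argument (repeatedly subtract $g_b=\min M_b$ using difference-closure until minimality forces exact divisibility), and in the final step the case $c_{g_b}\le 0$ should be mentioned explicitly (then $c_m=q\ge 1$ simply has no solution, which is consistent with your claim of at most one). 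Neither is a gap, just routine verification left implicit.
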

 
\begin{proof}
 Suppose to the contrary that there are infinitely many such $x$. 
Two of these elements must belong to the same block $N_c$. Suppose these elements are $c^r$ and $c^s$ for $r\neq s$, and suppose we have
$a^{p_1}c^r=a^{p_1+q}$ and $a^{p_2}c^s=a^{p_2+q}$.
Note that these equalities imply $a^p c^r=a^{p+q}$ for all $p\geq p_1$, and 
$a^p c^s=a^{p+q}$ for all $p\geq p_2$.
Let $p=\max(p_1,p_2)$, and evaluate the element $a^p c^{rs}$ twice:
\[
 a^p c^{rs} = a^p(c^r)^s =  a^p\underbrace{c^r\dots c^r}_{s}=
a^{p+q}\underbrace{c^r \dots c^r}_{s-1}
=\dots = a^{p+sq},
\]
and, similarly,
\[
a^p c^{rs} =  a^p(c^s)^r=a^{p+rq}.
\]
 But from $r\neq s$ it follows that $a^{p+sq}\neq a^{p+rq}$, a contradiction.
\end{proof}

\begin{lemma}
 \label{lemma2}
 If $a^pb^q=a^r$ holds in $S$ for some $a,b\in A$ and $p,q,r\in \N$ then $p\leq r$.
 \end{lemma}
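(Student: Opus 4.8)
The plan is to argue by contradiction: assume $a^pb^q=a^r$ with $r<p$ and set $d=p-r\geq 1$. First I would establish a \emph{descent rule}. Left-multiplying $a^pb^q=a^r$ by $a^t$ gives $a^{p+t}b^q=a^{r+t}$ for every $t\geq 0$, i.e. $a^mb^q=a^{m-d}$ for all $m\geq p$; iterating this, $a^mb^{kq}=a^{m-kd}$ whenever $m\geq p+(k-1)d$, the hypothesis on $m$ guaranteeing that every intermediate power stays $\geq p$ so that the rule remains applicable at each step. Thus right multiplication by a power of $b$ strictly decreases the exponent of $a$ along the whole tail $\{a^m:m\geq p\}$, which is the phenomenon I want to show is impossible.

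The target is a contradiction with Lemma \ref{lemma1}, whose mechanism I would reuse. The idea is to manufacture infinitely many \emph{distinct} elements that all shift the powers of $a$ by one and the same \emph{positive} amount, which Lemma \ref{lemma1} forbids. Padding the decrementing elements $b^{kq}$ by powers of $a$ produces candidates with a common shift: writing $x_k:=a^{(k+1)d}b^{kq}$ one checks $a^mx_k=a^{m+d}$ for all large $m$, so every $x_k$ witnesses the property of Lemma \ref{lemma1} with the single increment $d$. If the $x_k$ were infinitely many distinct elements, then, $A$ being finite, two of them would lie in a common block $N_c$ as powers $c^i,c^j$ with $i\neq j$, both inducing the shift $+d$; evaluating $a^Mc^{ij}$ through the two factorizations $(c^i)^j=(c^j)^i$ exactly as in the proof of Lemma \ref{lemma1} would then give $a^{M+jd}=a^{M+id}$ with $i\neq j$, the desired contradiction, whence $p\leq r$.

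The hard part will be arranging \emph{distinctness} and a \emph{common shift} simultaneously. Families that are visibly distinct, such as the orbit $\{a^pb^{kq}:k\geq 1\}$ (its members have pairwise distinct $a$-shifts and are therefore distinct elements), do not share a shift, so the commuting-powers computation cannot be triggered; whereas families engineered to share the fixed shift $d$, such as the $x_k$ above, are built from products and may a priori collapse to finitely many values. Indeed the bare relation $a^pb^q=a^r$ is in isolation perfectly consistent with a decrement: since $r<p$ lies below the descent threshold, it leaves $a^rb^q$ entirely unconstrained, so no contradiction can be extracted from the relation alone. Resolving this tension is the crux: one must genuinely use that each block is \emph{free monogenic} and that $A$ is finite to rule out the degeneration of the padded family, and only then does the pigeonhole-plus-commuting-powers step deliver the contradiction. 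Securing that non-degeneracy — rather than the routine descent or the final computation — is where I expect the real work to lie.
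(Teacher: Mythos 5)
There is a genuine gap here, and you have in fact located it yourself: nothing in the proposal rules out the degeneration of the padded family $x_k=a^{(k+1)d}b^{kq}$. Your descent rule and the computation $a^mx_k=a^{m+d}$ are correct, but your route to a contradiction requires infinitely many \emph{distinct} $x_k$, and you give no argument for distinctness --- you explicitly defer it as ``where the real work will lie''. Worse, the plan is oriented the wrong way round: Lemma \ref{lemma1} guarantees that any family of elements inducing the common shift $+d$ is \emph{finite}, so coincidences $x_k=x_l$ are inevitable; and a coincidence $a^{(k+1)d}b^{kq}=a^{(l+1)d}b^{lq}$ yields nothing by itself (left multiplication by any power of $a$ evaluates both sides to the same element via your descent rule, so the collapse is consistent with all the data you have). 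Trying to prove non-degeneracy of this particular family is therefore hopeless as stated.

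The paper's proof resolves the tension by choosing the common-shift family so that collapse itself is the absurdity. Writing $s=p-r$, one first checks $a^t\cdot a^sb^q=a^t$ for all $t\geq p$, whence the elements $y_u=(a^sb^q)^u a^s$ all satisfy $a^t y_u=a^{t+s}$: they are padded \emph{powers of the single element} $a^sb^q$. Lemma \ref{lemma1} then forces $y_u=y_v$ for some $u\neq v$, and post-multiplying by $b^q$ gives $(a^sb^q)^{u+1}=(a^sb^q)^{v+1}$, so $a^sb^q$ has finite order --- impossible, since every element of $S$ lies in some free monogenic block $N_c$ and so has infinite order. The missing idea is thus not a distinctness argument but its opposite: build the common-shift family from powers of one element, use the finiteness conclusion of Lemma \ref{lemma1} \emph{positively} to extract a coincidence, and convert that coincidence into periodicity. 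Your $x_k$, padded by powers of $a$ on the left only, admit no such conversion, which is exactly why the non-degeneracy step resisted.
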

 
\begin{proof}
 Suppose to the contrary that $r=p-s<p$. Note that for every $t\geq p$ we have
\[
a^t\cdot a^sb^q = a^{t+s}b^q = a^{t+s-p} a^pb^q=a^{t+s-p}a^r = a^{t+s-p+p-s} = a^t.
\]
 Hence, for every $u \geq1$ we have 
\[
a^t(a^sb^q)^u a^s=a^t a^s= a^{t+s}.
\]
 By Lemma \ref{lemma1} we must have 
\[
(a^sb^q)^u a^s=(a^s b^q)^v a^s
\]
 for some distinct $u,v\in\N$. 
Post-multiplying by  $b^q$ we obtain
\[
(a^sb^q)^{u+1}=(a^sb^q)^{v+1}.
\] 
This means that the element $a^s b^q\in S$ has finite order, a contradiction.
\end{proof}

The next result shows that multiplication by $x\in S$ cannot 'reverse' the order of elements
from the copies of $N$:

\begin{lemma}
\label{lemma3}
If $a,b\in A$ and $x\in S$ are such that
\[
a^px=b^r,\ a^{p+q}x=b^s
\]
for some $p,q,r,s\in \N$, then $r\leq s$.
\end{lemma}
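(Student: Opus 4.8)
The plan is to collapse the two hypotheses into a single relation and then argue by contradiction. Left-multiplying $a^px=b^r$ by $a^q$ gives $a^{p+q}x=a^qb^r$, and comparing with $a^{p+q}x=b^s$ yields $a^qb^r=b^s$. So the whole statement reduces to showing that if $a^qb^r=b^s$ (with $q\geq 1$), then $r\leq s$, and I would assume for contradiction that $r>s$.

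Under this assumption I would factor out a left identity for $b^s$. Since $r>s$ we have $b^r=b^{r-s}b^s$, so $b^s=a^qb^r=(a^qb^{r-s})b^s$. Writing $c=a^qb^{r-s}$, which is an element of $S$ and hence lies in some block $N_d$ and has infinite order (every element of $S$ generates a free monogenic subsemigroup, so its powers are pairwise distinct), we obtain $cb^s=b^s$, and by iteration $c^ub^s=b^s$ for every $u\geq 1$. This produces infinitely many distinct elements $c,c^2,c^3,\dots$ each acting as a left identity on $b^s$.

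From here I would mirror the proof of Lemma \ref{lemma2}. Left-multiplying $c^ub^s=b^s$ by $b^{r-s}$ gives $b^{r-s}c^ub^s=b^r$, so each $y_u:=b^{r-s}c^u$ satisfies $y_u\,b^s=b^{s+(r-s)}$, a relation of the form $y\,b^s=b^{s+Q}$ with the fixed data $b$ and $Q=r-s\geq 1$. The left–right dual of Lemma \ref{lemma1} then guarantees that only finitely many $y\in S$ can satisfy such a relation, so $y_u=y_v$ for some $u\neq v$, i.e. $b^{r-s}c^u=b^{r-s}c^v$. Left-multiplying by $a^q$ and using $a^qb^{r-s}=c$ turns this into $c^{u+1}=c^{v+1}$ with $u\neq v$, forcing $c$ to have finite order and contradicting the infinite order of elements of $S$.

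I expect the main obstacle to be precisely the appeal in the last step. The identity we can extract naturally, $cb^s=b^s$, is a \emph{left} identity, so the configuration we build is the left–right mirror of the ones appearing in Lemmas \ref{lemma1} and \ref{lemma2}, whereas Lemma \ref{lemma1} is stated only one-sidedly. The clean remedy is to record the dual of Lemma \ref{lemma1}, namely that for fixed $b\in A$ and $Q\in\N$ there are only finitely many $y\in S$ with $y\,b^p=b^{p+Q}$ for some $p$; this holds verbatim by the left–right mirror of the proof of Lemma \ref{lemma1} (take two such $y$ in a common block, expand a suitable product in two ways by peeling the block generator off on the right, and reach the same contradiction). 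Viewed this way, the conclusion $r\leq s$ is exactly the left–right dual of the conclusion $p\leq r$ of Lemma \ref{lemma2}, so the real content is checking that the short arguments of Lemmas \ref{lemma1} and \ref{lemma2} run equally well on the opposite side.
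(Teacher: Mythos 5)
Your proof is correct and takes essentially the same route as the paper: the paper likewise collapses the two hypotheses into $a^qb^r=a^qa^px=a^{p+q}x=b^s$ and then invokes (the dual of) Lemma \ref{lemma2}. The only difference is that you spell out in full the left--right dualization of Lemmas \ref{lemma1} and \ref{lemma2} that the paper leaves implicit, and your mirrored arguments check out.
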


\begin{proof}
The assertion follows from
\[
a^qb^r=a^qa^px=a^{p+q}x=b^s,
\]
and (the dual of) Lemma \ref{lemma2}.
\end{proof}

The next lemma is absolutely pivotal for proofs of both finite presentability and residual finiteness:

\begin{lemma}
\label{lemma4}
If 
\begin{equation}
\label{eq2}
a^px=b^r,\ a^{p+q}x=b^{r+s}
\end{equation}
for some $a,b\in A$, $x\in S$, $p,q,r\in\N$, $s\in\N_0$, then
\[
a^{p+qt}x=b^{r+st}
\]
for all $t\in\N_0$.
\end{lemma}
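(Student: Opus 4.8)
The plan is to proceed by induction on $t$, treating the two equations in \eqref{eq2} as the cases $t=0$ and $t=1$. The driving observation is that, although $x$ is an opaque element of $S$, the hypotheses can be distilled into a single relation among powers of $a$ and $b$ alone, which then propagates under multiplication.

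First I would compute $a^{p+q}x$ in two ways. By the second equation of \eqref{eq2} it equals $b^{r+s}$; factoring out $a^q$ and using the first equation gives $a^{p+q}x=a^q(a^px)=a^qb^r$. Comparing the two expressions yields the key identity
\[
a^q b^r = b^{r+s}.
\]
Multiplying this on the right by $b^{st}$ upgrades it to $a^q b^{r+st}=b^{r+s(t+1)}$, the precise statement that advancing the exponent of $a$ by $q$ advances the exponent of $b$ by $s$.

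The inductive step is then immediate: assuming $a^{p+qt}x=b^{r+st}$, I would left-multiply by $a^q$ to get
\[
a^{p+q(t+1)}x = a^q\bigl(a^{p+qt}x\bigr) = a^q b^{r+st} = b^{r+s(t+1)},
\]
using the inductive hypothesis for the middle equality and the upgraded identity for the last. This closes the induction and proves the lemma for all $t\in\N_0$.

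I do not expect a genuine obstacle: the argument uses only associativity and substitution, and in particular needs none of Lemmas \ref{lemma1}--\ref{lemma3}. The only thing to get right is the bookkeeping of exponents (together with the harmless degenerate cases $s=0$ or $t=0$, where the factor $b^{st}$ simply disappears); the conceptual point is merely to replace the unknown $x$ by the clean internal relation $a^qb^r=b^{r+s}$ before inducting.
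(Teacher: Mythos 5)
Your proof is correct and is essentially identical to the paper's: the paper also extracts the key identity $a^qb^r=b^{r+s}$ (its equation \eqref{eq3}) by evaluating $a^{p+q}x$ two ways, then inducts on $t$ via left-multiplication by $a^q$, with the same exponent bookkeeping $a^qb^rb^{st}=b^{r+s}b^{st}=b^{r+s(t+1)}$. Your remark about the degenerate cases $s=0$ or $t=0$ (where $b^{st}$ is vacuous) is a point the paper passes over silently, but there is no substantive difference.
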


\begin{proof}
   First note that from \eqref{eq2} we have 
   \begin{equation}
   \label{eq3}
   b^{r+s}=a^{p+q}x= a^{q}a^px=a^{q}b^r.
   \end{equation}
   We now prove the lemma by induction on $t$.
   For $t=0$ we get the first relation in \eqref{eq2}.
  Assume the statement holds for some $t$. Then, by induction and \eqref{eq3},
\[
a^{p+q(t+1)}x= a^{q}a^{p+qt}x=a^{q}b^{r+st}= a^{q}b^r b^{st}
=b^{r+s}b^{st}= b^{r+s(t+1)},
\]
proving the lemma.
\end{proof}

Motivated by Lemma \ref{lemma4} we introduce the sets
\[
T(a,x,b)=\{ y\in N_a\st yx\in N_b\}\ (a,b\in A,\ x\in S).
\]
The following is immediate:

\begin{lemma}
\label{lemma4a}
For any $a\in A$ and $x\in S$ we have
\[
N_a=\bigcupdot_{b\in A} T(a,x,b).
\]
\end{lemma}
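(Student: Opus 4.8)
The plan is to read the result off directly from the fact that $S$ is a \emph{disjoint} union of the blocks $N_b$, with no appeal to the earlier multiplicative regularities. Each $T(a,x,b)$ is by definition a subset of $N_a$, so it suffices to show that every element of $N_a$ lies in exactly one of these sets.

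First I would fix $a\in A$ and $x\in S$, and take an arbitrary $y\in N_a$. Since $S$ is a semigroup it is closed under multiplication, so $yx\in S$. The decomposition $S=\bigcupdot_{b\in A}N_b$ then places $yx$ in precisely one block, say $yx\in N_{b_0}$. This single observation yields both required properties at once. On the one hand $y\in T(a,x,b_0)$, so the sets $T(a,x,b)$ cover $N_a$. On the other hand, if $y$ belonged to both $T(a,x,b_0)$ and $T(a,x,b_1)$ with $b_0\neq b_1$, then $yx$ would lie in $N_{b_0}\cap N_{b_1}=\emptyset$, which is impossible; hence the union is disjoint.

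I expect no genuine obstacle here: the content of the statement is purely organisational, recording that multiplication by a fixed $x$ partitions $N_a$ according to which block the image falls into. The real work lies not in this lemma but in Lemma \ref{lemma4}, which guarantees that each piece $T(a,x,b)$ is in fact an arithmetic progression (or empty); the present lemma merely erects the framework in which that finer information will subsequently be deployed.
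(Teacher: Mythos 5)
Your proof is correct and is precisely the ``immediate'' argument the paper has in mind (the paper states Lemma \ref{lemma4a} without proof, prefacing it with ``The following is immediate''): membership of $yx$ in exactly one block of the disjoint decomposition of $S$ gives both covering and disjointness at once. One peripheral quibble: your closing remark attributes the arithmetic-progression structure of the pieces to Lemma \ref{lemma4}, but that structure (including the possibility of a finite exceptional part $F$, and of singleton pieces) is actually established in Lemma \ref{lemma5}, with Lemma \ref{lemma4} supplying the key propagation step.
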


By Lemmas \ref{lemma3}, \ref{lemma4}, if a set $T(a,x,b)$ 
contains more than one element, then it contains an arithmetic progression, and hence is infinite.
In fact, if $T(a,x,b)$ is infinite then it actually stabilises into an arithmetic progression:

\begin{lemma}
\label{lemma5}
If $T=T(a,x,b)$ is infinite then there exist sets $F=F(a,x,b)$, $P=P(a,x,b)$ such that the following hold:
\begin{enumerate}[label=\textup{(\roman*)}, leftmargin=*,widest=iii]
\item
\label{T0}
$T=F\cupdot P$;
\item
\label{T1}
$P=\{ a^{p+qt}\st t\in\N_0\}$ for some $p=p(a,x,b),q=q(a,x,b)\in\N$ and $a^{p-q}\not\in T$;
\item
\label{T2}
$F\subseteq \{a,\dots,a^{p-1}\}$ is a finite set.
\end{enumerate}
\end{lemma}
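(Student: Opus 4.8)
The plan is to translate everything into the language of exponents. Identifying each $a^m\in N_a$ with its exponent $m$, I regard $T=T(a,x,b)$ as an infinite subset $\hat T\subseteq\N$, and the goal becomes showing that $\hat T$ is, above some threshold, a single arithmetic progression of step $q$, with only finitely many stray elements below that threshold.

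The engine of the argument is a closure property of $\hat T$ supplied by Lemmas \ref{lemma3} and \ref{lemma4}. Write $a^mx=b^{r(m)}$ for each $a^m\in T$. If $m_1<m_2$ both lie in $\hat T$, then Lemma \ref{lemma3} gives $r(m_1)\le r(m_2)$, so Lemma \ref{lemma4}, applied with difference $m_2-m_1$ and $s=r(m_2)-r(m_1)\in\N_0$, yields $a^{m_1+(m_2-m_1)t}x\in N_b$ for all $t\in\N_0$. In other words,
\[
m_1,m_2\in\hat T,\ m_1<m_2\ \Longrightarrow\ \{\,m_1+k(m_2-m_1):k\in\N_0\,\}\subseteq\hat T .
\]
Thus any two elements of $\hat T$ force the whole arithmetic progression they span, together with its upward continuation, into $\hat T$.

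Next I let $q$ be the least positive difference occurring between two elements of $\hat T$, which exists since $\hat T$ is infinite. Fixing a witnessing pair $m,m+q\in\hat T$, the closure property gives a full tail $\{m+kq:k\in\N_0\}\subseteq\hat T$. The crucial step, and the one I expect to carry the weight of the proof, is a minimality argument: I claim that whenever $\hat T$ contains a complete step-$q$ tail $\{c+kq:k\in\N_0\}$, every element $n\in\hat T$ with $n\ge c$ is congruent to $c$ modulo $q$. Indeed, locate $n$ between consecutive terms, $c+jq\le n<c+(j+1)q$ with $j\ge 0$; were $n$ strictly larger than $c+jq$, then $n-(c+jq)$ would be a positive difference of two elements of $\hat T$ smaller than $q$, contradicting the choice of $q$. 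Hence $n=c+jq$. This is precisely what excludes any element sitting off the progression once we are past its starting point.

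Finally I pin down $p$. Consider the set $G$ of all $c\in\hat T$ for which the entire tail $\{c+kq:k\in\N_0\}$ lies in $\hat T$; it is nonempty (it contains $m$) and bounded below, hence has a least element $p$. Minimality forces $a^{p-q}\notin T$: if $p-q\ge 1$ and $p-q\in\hat T$ then $p-q$ would itself lie in $G$, contradicting minimality, while if $p-q\le 0$ the element $a^{p-q}$ does not exist in $S$. Setting $P=\{a^{p+qt}:t\in\N_0\}$ gives $P\subseteq T$ by the definition of $p$, and applying the minimality claim with $c=p$ shows that every exponent of $\hat T$ that is $\ge p$ lies in $P$. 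Consequently $F:=T\setminus P$ consists only of exponents strictly below $p$, so $F\subseteq\{a,\dots,a^{p-1}\}$ is finite, and $T=F\cupdot P$ is a disjoint union. This establishes \ref{T0}--\ref{T2}. The only genuinely delicate point is the minimality argument of the preceding paragraph; everything else is bookkeeping assembled around the closure property.
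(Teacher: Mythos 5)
Your proposal is correct and follows essentially the same route as the paper: you take $q$ to be the least positive gap between exponents in $T$, use Lemmas \ref{lemma3} and \ref{lemma4} to force the full step-$q$ tail into $T$, choose $p$ minimal (your least element of $G$ coincides with the paper's minimal $p$ with $a^p,a^{p+q}\in T$, by the same closure property), and invoke minimality of $q$ to exclude off-progression elements. Your explicit verification that every element of $T$ with exponent at least $p$ lies on the progression is just a spelled-out version of the paper's one-line appeal to the minimality of $q$.
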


\begin{proof}
Let $q\in\N$ be the smallest number such that $a^p,a^{p+q}\in T$ for some $p\in\N$.
Furthermore, let $p$ be the smallest such; in particular $a^{p-q}\not\in T$.
Let $P=\{ a^{p+qt}\st t\in\N_0\}$.
By Lemmas \ref{lemma3}, \ref{lemma4} we have $P\subseteq T$,
and by minimality of $q$ we have $a^{p+tq+r}\not\in T$ for any $t\in\N_0$ and any $r\in\{1,\dots,q-1\}$.
Hence $F=T\setminus P\subseteq\{ a,\dots,a^{p-1}\}$, and the lemma is proved.
\end{proof}

The next lemma discusses the values in the set $T(a,x,b)\cdot x$.

\begin{lemma}
\label{lemma35}
For $T=T(a,x,b)$ we either have $|Tx|\leq 1$ or else $yx\neq zx$ for all distinct $y,z\in T$.
\end{lemma}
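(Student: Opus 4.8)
The plan is to prove the dichotomy by its contrapositive form: if the map $y\mapsto yx$ is injective on $T$ then the second alternative holds automatically, so I may assume there is a collision $yx=zx$ with distinct $y=a^i$, $z=a^j$ ($i<j$), and my goal becomes to show $|Tx|\le 1$. Since a collision forces $|T|\ge 2$, the set $T$ is infinite, so Lemma~\ref{lemma5} applies and writes $T=F\cupdot P$ with $P=\{a^{p+qt}\st t\ge 0\}$; by Lemma~\ref{lemma4} the values along $P$ have the shape $a^{p+qt}x=b^{r+st}$ for some fixed $r\in\N$ and some $s\ge 0$. The first thing I would extract from the collision is an \emph{absorption relation}: writing $w=a^ix=b^m$ and $d=j-i$, the equality $zx=a^d(a^ix)=a^dw$ together with $zx=yx=w$ gives $a^dw=w$, and multiplying on the right by powers of $b$ yields $a^db^{m'}=b^{m'}$ for every $m'\ge m$.

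Next I would show $s=0$, i.e. that $P$ maps to the single value $b^r$. Choosing $t$ large enough that $r+st\ge m$, the absorption relation gives $a^{p+qt+d}x=a^db^{r+st}=b^{r+st}=a^{p+qt}x$, a fresh collision living on $P$; feeding it back into Lemma~\ref{lemma4} produces a \emph{constant} progression of step $d$ through $a^{p+qt}$. Comparing this constant progression with $P$ at a common position — the first one past $a^{p+qt}$ occurs after $\lcm(d,q)$ further steps — equates $b^{r+st}$ with $b^{r+st+sd/\gcd(d,q)}$, which is impossible unless $s=0$. Hence $a^{p+qt}x=b^r$ for all $t$, and in particular $a^qb^{r'}=b^{r'}$ for all $r'\ge r$.

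Finally I would deal with the finite part $F\subseteq\{a,\dots,a^{p-1}\}$, which is the real obstacle: a priori some $a^k\in F$ could map to a strictly smaller power $b^{n_k}$ with $n_k<r$, which would break the dichotomy. Fix such an $a^k$. From $a^px=a^{p-k}(a^kx)=a^{p-k}b^{n_k}$ and the dual of Lemma~\ref{lemma2} I first get $n_k\le r$. Now I apply Lemma~\ref{lemma4} to the pair $a^kx=b^{n_k}$, $a^{p}x=b^{r}$ (step $c=p-k$, increment $r-n_k\ge 0$), obtaining $a^{k+ct}x=b^{n_k+(r-n_k)t}$ for all $t\ge 0$. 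For every $t\ge 1$ the exponent $k+ct\ge p$, so by Lemma~\ref{lemma5} the element $a^{k+ct}$ must lie on $P$ and therefore maps to $b^r$; comparing the two expressions at $t=2$ forces $2r-n_k=r$, i.e. $n_k=r$. Thus every element of $T$ maps to $b^r$, so $|Tx|=1$, completing the dichotomy. The crux throughout is the cooperation between Lemma~\ref{lemma4}, which manufactures an arithmetic progression out of any two collision-related values, and Lemma~\ref{lemma5}, which confines all sufficiently large exponents of $T$ to the single progression $P$.
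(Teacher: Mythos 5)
Your proof is correct, but it takes a genuinely different route from the paper's, which is much shorter. The paper argues by direct contradiction from the negation of the dichotomy: if both alternatives fail, then $T$ contains a pair with strictly increasing images ($a^px=b^r$, $a^{p+q}x=b^{r+s}$ with $s\geq 1$) \emph{and} a colliding pair ($a^ux=a^{u+v}x=b^w$); Lemma \ref{lemma4} extends both into progressions, one with unboundedly growing images and one with constant image $b^w$, and choosing $t_1$ with $r+st_1>w$ and then $t_2$ with $u+vt_2>p+qt_1$ contradicts the monotonicity Lemma \ref{lemma3} in five lines, never invoking the structure result Lemma \ref{lemma5}. You instead prove the contrapositive constructively: the collision gives the absorption identity $a^db^{m'}=b^{m'}$, which manufactures a constant progression overlapping $P$, and uniqueness of normal forms in $N_b$ (rather than Lemma \ref{lemma3}) forces $s=0$; your separate treatment of the finite part $F$ --- applying Lemma \ref{lemma4} to the pair $(a^kx,a^px)$ and evaluating at $t=2$, where the exponent $2p-k\geq p$ pushes the element into $P$ --- is a necessary extra step in your setup, and it is handled correctly, including the preliminary bound $n_k\leq r$ via the dual of Lemma \ref{lemma2}. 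What your approach buys is more information: you show $Tx=\{b^r\}$ explicitly, identifying the common value, where the paper's argument merely excludes coexistence of the two behaviours; the cost is length and dependence on Lemma \ref{lemma5} (legitimate, since it precedes Lemma \ref{lemma35} and rests only on Lemmas \ref{lemma3} and \ref{lemma4}). One presentational nit: in your second step, the choice of $t$ with $r+st\geq m$ tacitly assumes $s\geq 1$; since $s=0$ is exactly the conclusion sought there, you should state explicitly that you argue by contradiction assuming $s>0$, the case $s=0$ being vacuous.
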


\begin{proof}
Suppose that for some $p,q,r,s\in\N$ we have
\begin{equation}
\label{eq36}
a^px=b^r,\ a^{p+q}x=b^{r+s},
\end{equation}
while for some $u,v,w\in\N$ we have
\begin{equation}
\label{eq37}
a^ux=b^w,\ a^{u+v}=b^w.
\end{equation}
From \eqref{eq36}, \eqref{eq37} and Lemma \ref{lemma4} we have:
\begin{alignat}{2}
\label{eq38}
a^{p+qt}x&=b^{r+st} &\quad& (t\in\N),
\\
\label{eq39}
a^{u+vt}x&=b^w && (t\in\N).
\end{alignat}
Let $t_1\in\N$ be such that
\begin{equation}
\label{eq40}
r+st_1>w,
\end{equation}
and let $t_2\in\N$ be such that
\begin{equation}
\label{eq41}
u+vt_2>p+qt_1.
\end{equation}
The inequalities \eqref{eq40}, \eqref{eq41} and relations \eqref{eq38}, \eqref{eq39} with $t=t_1$ and $t=t_2$ respectively contradict Lemma \ref{lemma3}.
\end{proof}

The rest of this section will be devoted to proving that there are only finitely many distinct sets $T(a,x,b)$,
a fact that will be crucial in Section \ref{sec4}.
We accomplish this (in Lemma \ref{lemma10}) by proving that there only finitely many distinct numbers
$q(a,x,b)$ (Lemma \ref{lemma6}), finitely many distinct numbers $p(a,x,b)$ (Lemma \ref{lemma7}), 
and finitely many distinct sets $F(a,x,b)$ (Lemma \ref{lemma9}). We begin, however, with an elementary observation, which must be well known, but we prove it for completeness:

\begin{lemma}
\label{lemma5a}
For every $n\in\N$ and every $r\in\Q^+$ the set
\[
\bigl\{ (m_1,\dots,m_n)\in \N^n\st \frac{1}{m_1}+\dots+\frac{1}{m_n}=r\bigr\}
\]
is finite.
\end{lemma}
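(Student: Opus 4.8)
The plan is to argue by induction on $n$, the number of summands. For $n=1$ the equation $\frac{1}{m_1}=r$ forces $m_1=1/r$, so there is at most one solution and the set is finite. For the inductive step I would assume the claim for $n-1$ summands (for \emph{every} target value in $\Q^+$) and deduce it for $n$.

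The crucial observation is that, although an individual denominator $m_i$ in a solution can be arbitrarily large, the \emph{smallest} one cannot. Indeed, if $(m_1,\dots,m_n)$ is any solution and $m=\min_i m_i$, then
\[
\frac{n}{m}\geq\frac{1}{m_1}+\dots+\frac{1}{m_n}=r,
\]
whence $m\leq n/r$. Thus in every solution the minimal denominator lies in the finite set $\{1,\dots,\lfloor n/r\rfloor\}$.

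From here I would partition the solution set according to which coordinate attains the minimum and what its value is. For each index $i\in\{1,\dots,n\}$ and each integer $m$ with $1\leq m\leq n/r$, consider those solutions in which $m_i=m$; every solution is accounted for by at least one such pair $(i,m)$. Fixing $(i,m)$, the remaining coordinates satisfy
\[
\sum_{j\neq i}\frac{1}{m_j}=r-\frac{1}{m}.
\]
If $r-1/m\leq 0$ there are no solutions, since for $n\geq 2$ a sum of $n-1\geq 1$ positive unit fractions is strictly positive. If $r-1/m>0$, then $r-1/m\in\Q^+$ and the inductive hypothesis, applied with $n-1$ summands and target $r-1/m$, yields only finitely many completions. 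As there are finitely many pairs $(i,m)$ and each contributes finitely many solutions, the whole set is a finite union of finite sets, hence finite.

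The only real content is the bound $m\leq n/r$ on the minimal denominator; once this pigeonhole-type estimate is in place, the induction runs without obstruction. I expect no serious difficulty, the single point requiring a little care being the degenerate cases $r-1/m\leq 0$, where one must observe that $n-1\geq 1$ positive unit fractions can never sum to a non-positive rational.
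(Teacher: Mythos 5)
Your proof is correct and follows essentially the same route as the paper: induction on $n$, with the key pigeonhole bound $\min_i m_i\leq n/r$ reducing to finitely many cases, each handled by the inductive hypothesis with target $r-1/m$ (the paper phrases this by ordering $m_1\geq\dots\geq m_n$ and bounding $m_n$, a cosmetic difference from your partition by the index attaining the minimum). Your explicit treatment of the degenerate case $r-1/m\leq 0$ is a small point of care the paper leaves implicit, but it does not change the argument.
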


\begin{proof}
We prove the assertion by induction on $n$, the case $n=1$ being obvious.
Let $n>1$, and assume the assertion is true for $n-1$.
Consider an $n$-tuple $(m_1,\dots,m_n)\in\N^n$ such that
\[
\frac{1}{m_1}+\dots+\frac{1}{m_n}=r.
\]
Without loss of generality assume $m_1\geq\dots\geq m_n$.
Then we must have $1/m_n \geq r/n$, and so $m_n\leq n/r$. Thus there are only finitely many possible values for $m_n$.
For each of them, the remaining $n-1$ numbers satisfy
\[
\frac{1}{m_1}+\dots+\frac{1}{m_{n-1}}=r-\frac{1}{m_n},
\]
and by induction there are only finitely many such $(n-1)$-tuples.
\end{proof}

\begin{lemma}
\label{lemma6}
The set
\[
\{q(a,x,b)\st a,b\in A,\ x\in S\}
\]
is finite.
\end{lemma}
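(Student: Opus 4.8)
The plan is to fix $a\in A$ and $x\in S$ and analyse how the single copy $N_a$ decomposes under right multiplication by $x$. By Lemma \ref{lemma4a}, $N_a$ is the disjoint union of the sets $T(a,x,b)$ over $b\in A$, of which there are at most $n=|A|$. Each finite $T(a,x,b)$ contributes only finitely many elements, and by Lemma \ref{lemma5} each infinite $T(a,x,b)$ decomposes as $F(a,x,b)\cupdot P(a,x,b)$, where $F(a,x,b)$ is finite and $P(a,x,b)=\{a^{p+qt}\st t\in\N_0\}$ is an arithmetic progression of common difference $q=q(a,x,b)$. Collecting the finitely many finite pieces $T(a,x,b)$ together with the finitely many finite remainders $F(a,x,b)$, we see that $N_a$ is, up to a finite set, the disjoint union of the arithmetic progressions $P(a,x,b)$ taken over those $b$ for which $T(a,x,b)$ is infinite.

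The next step is to pass to natural densities. Identifying $N_a$ with $\N$ via the exponent map $a^i\mapsto i$, each $P(a,x,b)$ becomes an arithmetic progression of common difference $q(a,x,b)$, hence a set of natural density $1/q(a,x,b)$. These progressions are pairwise disjoint (since the $T(a,x,b)$ are) and their union is cofinite in $\N$ by the previous paragraph, so it has density $1$. Because natural density is additive over disjoint unions whenever the individual densities exist, summing the densities yields
\[
\sum_b \frac{1}{q(a,x,b)}=1,
\]
the sum being over those $b\in A$ with $T(a,x,b)$ infinite.

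Finally I would invoke Lemma \ref{lemma5a}. The displayed identity has the form $1/m_1+\dots+1/m_k=1$ with $k\leq n$ terms, so for each fixed $k$ there are only finitely many admissible tuples $(m_1,\dots,m_k)$, and consequently only finitely many values can occur among their coordinates. Letting $k$ range over $1,\dots,n$ produces a finite set of possible values that bounds every $q(a,x,b)$, uniformly in $a$, $b$ and $x$. Since $A$ is finite, this finishes the argument. The routine parts are the bookkeeping of the finite exceptional sets and the computation of the density of an arithmetic progression; the one genuinely substantive step is the density identity $\sum_b 1/q(a,x,b)=1$, for it is precisely this reciprocal relation, fed into Lemma \ref{lemma5a}, that forces the common differences into a finite range.
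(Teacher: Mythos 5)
Your proposal is correct and follows essentially the same route as the paper: decompose $N_a$ via Lemmas \ref{lemma4a} and \ref{lemma5} into finitely many exceptional elements plus pairwise disjoint arithmetic progressions $P(a,x,b)$, derive the reciprocal identity $\sum_b 1/q(a,x,b)=1$, and feed it into Lemma \ref{lemma5a}. The only difference is local: the paper obtains the identity by counting points of each progression inside an interval of length $m=\lcm\{q(a,x,b)\st |T(a,x,b)|=\infty\}$ placed beyond all the finite sets $F(a,x,b)$, whereas you obtain it via additivity of natural density over the disjoint, cofinite union of the progressions --- an equally valid justification of the same step.
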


\begin{proof}
Fix $a\in A$, $x\in S$, and notice that at least one of the sets $T(a,x,b)$ ($b\in A$) is infinite
by Lemma \ref{lemma4a}.
Let
\[
m=\lcm \{ q(a,x,b)\st b\in A,\ |T(a,x,b)|=\infty\}.
\]
Recall that all the sets $F(a,x,b)$ are finite, and let $r\in\N$ be such that
\[
r>\max \bigcup_{b\in A} F(a,x,b).
\]
Let $I=\{ a^r,a^{r+1},\dots,a^{r+m-1}\}$, an `interval'  of size $m$.
From Lemma \ref{lemma5} \ref{T2} we have $I\cap F(a,x,b)=\emptyset$ for all $b\in A$,
so Lemma \ref{lemma4a} implies that $I$ is the disjoint union of sets $I\cap P(a,x,b)$ ($b\in A$).
Since for every $b\in A$ with $P(a,x,b)\neq \emptyset$ we have $q(a,x,b)\mid m$, the set $I$ contains precisely
$m/q(a,x,b)$ elements from $P(a,x,b)$. It follows that
\[
\sum_{b\in A} \frac{m}{q(a,x,b)}=m,
\]
and hence
\[
\sum_{b\in A} \frac{1}{q(a,x,b)}=1.
\]
The assertion now follows from Lemma \ref{lemma5a}.
\end{proof}

\begin{lemma}
\label{lemma7}
The set
\[
\{ p(a,x,b)\st a,b\in A,\ x\in S\}
\]
is finite.
\end{lemma}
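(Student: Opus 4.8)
The plan is to bound $p(a,x,b)$ by a constant depending only on the semigroup $S$, which at once yields finiteness of the set of values since $A$ is finite (recall $n=|A|$). Fix $a\in A$. By Lemma \ref{lemma6} the periods $q(a,x,b)$ take only finitely many values, so let $Q$ be the least common multiple of all of them; this is a universal constant. Re-indexing each infinite progression by steps of $Q$ via Lemma \ref{lemma4}, every block $b$ with $T(a,x,b)$ infinite yields a relation $a^{Q}b^{k}=b^{k+S_b}$ for infinitely many $k$; post-multiplying by powers of $b$ shows it then holds for all $k$ beyond some threshold. The key point is that both the shift $S_b$ and the threshold are \emph{intrinsic} to $S$: they record how left-multiplication by the fixed element $a^{Q}$ acts on large powers of $b$, and are forced by the large-$k$ behaviour independently of $x$. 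Applying the left--right duals of Lemmas \ref{lemma3}--\ref{lemma5} to left-multiplication by $a^{Q}$, every block $c$ acquires such an intrinsic threshold $r_0(c)$, and I set $\rho=\max_{c\in A}r_0(c)$, again a universal constant.

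Next I read the situation dynamically. Left-multiplication by $a^{Q}$ is a fixed map $\mu\colon S\to S$, and for each residue $j_0$ the sequence $a^{j_0}x,\,a^{j_0+Q}x,\,a^{j_0+2Q}x,\dots$ is exactly the $\mu$-orbit of $a^{j_0}x$. By the previous paragraph, once an orbit element has exponent at least $\rho$ the map $\mu$ preserves its block and merely shifts the exponent; such elements are ``settled'' and no new block is entered from them. Hence a new block can be entered only while the orbit still passes through elements of exponent $<\rho$. There are at most $n(\rho-1)$ such ``small'' elements, and along a single orbit they cannot recur before the orbit becomes periodic; therefore the orbit reaches its settled regime within a universally bounded number of steps, hence within a universally bounded index $C=C(n,Q,\rho)$. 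Consequently no progression $P(a,x,b)$ can begin after index $C$, so $p(a,x,b)\le C$ for every $b$, as required.

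The hard part is the uniform control of the transient of the orbit, where two complications arise. First, when some shift $S_c$ vanishes the block has constant image (the degenerate case $|Tx|\le 1$ of Lemma \ref{lemma35}), so exponents need not grow; I must check the orbit still settles --- into a fixed or a cyclic configuration --- within the same bound, using that the settling value is reached without repetition among the finitely many small elements. Second, $\mu$ need not preserve the block even at large exponents for every $c$: it may carry $N_c$ into a different block $\beta(c)$. Since $\beta$ is a self-map of the finite set $A$, its trajectory meets a cycle after at most $n$ steps, so this ``large-exponent wandering'' also contributes only boundedly many transient steps. Combining the bounded small-exponent excursions with the bounded block-wandering gives the universal bound $C$; making this bookkeeping precise, together with verifying the intrinsic eventual-affine description of $\mu$ through the dual lemmas, is the principal technical obstacle, after which the finiteness of $\{p(a,x,b)\}$ follows.
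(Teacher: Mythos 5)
Your first step is sound and the overall strategy is genuinely different from the paper's: the shift $S_b$ in $a^Qb^k=b^{k+S_b}$ is indeed intrinsic (once the relation holds for one $k$ it propagates to all larger $k$ by post-multiplication, and two competing shifts would collide at large $k$), so transferring the problem to the fixed map $\mu(y)=a^Qy$ is legitimate. But the argument has genuine gaps at exactly the points you defer to as ``the principal technical obstacle'', and the claims carrying the main weight are false as stated. First, ``once an orbit element has exponent at least $\rho$ the map $\mu$ preserves its block'', and the fallback that $\beta$ is a self-map of the finite set $A$: the dual of Lemma \ref{lemma5} applied to left multiplication by $a^Q$ only says that $N_c$ eventually decomposes into finitely many arithmetic progressions, and \emph{distinct residue classes of the exponent may be carried into distinct blocks}, so $\beta(c)$ need not be well defined and the cycle-of-$\beta$ bound on block wandering does not go through. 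Second, ``such elements are settled and no new block is entered from them'' fails in the degenerate case you yourself flag: a dual progression can map with shift $0$ onto a single element $d^{l_0}$ with $l_0<\rho$, so elements of arbitrarily large exponent can fall back into the small regime and enter new blocks from there; how often and in what order this happens along the orbit of $a^{j_0}x$ is entangled with $x$, and you give no mechanism bounding it. Third, even the natural repair --- tracking states $(\text{block},\ \text{exponent mod }M)$ for an intrinsic modulus $M$ --- is not a deterministic finite automaton: along a dual progression of difference $m$ mapping with shift $s$, inputs congruent mod $M$ have images differing by $sM/m$, which need not be divisible by $M$, so the state sequence is not a function of the current state and your uniform transient bound $C$ is not established. (Note also the danger of circularity: the eventual periodicity of the block sequence $j\mapsto\text{block of }a^jx$ is exactly what Lemma \ref{lemma5} gives for right multiplication by $x$, with pre-period governed by the very numbers $p(a,x,b)$ you are trying to bound, so any settling argument must get its uniformity from somewhere else.)

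By contrast, the paper dispatches the lemma with a short pigeonhole argument that sidesteps all dynamics. With $Q=\max\{q(c,y,d)\st c,d\in A,\ y\in S\}$ finite by Lemma \ref{lemma6}, suppose $p=p(a,x,b)>2nQ$ and consider the $n$ pairs $\{a^{p-(2t-1)q},a^{p-2tq}\}$ for $t=1,\dots,n$, all of positive exponent. Minimality of $p$ together with Lemmas \ref{lemma3} and \ref{lemma4} forbids both members of any pair from lying in $T(a,x,b)$, so each pair contributes an element of some $T(a,x,c)$ with $c\neq b$; by pigeonhole two such elements $a^{p-uq},a^{p-vq}$ ($v<u$) lie in the \emph{same} $T(a,x,c)$, whence Lemma \ref{lemma4} puts $a^{p-uq+(u-v)qt}\in T(a,x,c)$ for all $t$, and for large $t$ this exponent equals $p+wq$ for some $w\in\N_0$, contradicting disjointness of $T(a,x,c)$ and $T(a,x,b)$. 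This yields the uniform bound $p(a,x,b)\leq 2nQ$ in a few lines. If you wish to salvage your route, you would need a genuinely new idea for the uniform transient bound --- and the paper's pigeonhole on the $2n$ backward translates of $p$ is, in effect, that idea.
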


\begin{proof}
Fix $a,b\in A$, $x\in S$, and for brevity write $p=p(a,x,b)$, $q=q(a,x,b)$ .
Recall that $p$ has been chosen to be the smallest possible with respect to the condition
that 
\begin{equation}
\label{eq4}
p+qt\in T(a,x,b)\  (t\in\N_0).
\end{equation}
Recalling Lemma \ref{lemma6}, let
\[
Q=\max \{ q(c,y,d)\st c,d\in A,\ y\in S\}.
\]
Assume, aiming for contradiction, that
\[
p>2nQ.
\]
Since $Q\geq q$ we have that $p-2nq>0$.
Consider the $n$ pairs
\[
\{ a^{p-(2t-1)q}, a^{p-2tq}\}\ (t=1,\dots,n).
\]
By Lemmas \ref{lemma3}, \ref{lemma4} and minimality of $p$ we cannot have both members of one of these pairs belong to $T(a,x,b)$.
Hence at least one member in each pairs belongs to some $T(a,x,c)$ with $c\neq b$.
By the pigeonhole principle two of these must belong to the same $T(a,x,c)$, say
\[
a^{p-uq},a^{p-vq}\in T(a,x,c)
\]
for some $1\leq v < u \leq 2n$.
Again by Lemmas \ref{lemma3}, \ref{lemma4} we have that
\[
a^{p-uq+(u-v)qt} \in T(a,x,c)
\]
for all $t\in\N_0$.
On the other hand for $t$ sufficiently large (e.g. $t\geq u$) we have
\[
p-uq+(u-v)qt\geq p,
\]
so that 
\[
p-uq+(u-v)qt=p+wq
\]
for some $w\in\N_0$, and so from \eqref{eq4} we have
\[
a^{p-uq+(u-v)qt}\in T(a,x,b)\neq T(a,x,c),
\]
a contradiction. We conclude that $p(a,x,b)\leq 2nQ$ for all $a,b\in A$, $x\in S$, where the right hand side does not depend on $a$, $b$ or $x$.
\end{proof}

In order to prove our final ingredient, that there are only finitely many distinct sets $F(a,x,b)$, we require one more
elementary fact:

\begin{lemma}
\label{lemma8}
Consider a finite collection of arithmetic progressions:
\[
R_i=\{p_i+tq_i\st t\in\N_0\}\ (i=1,\dots,n).
\]
If there exists $p\in\N$ such that 
\[
[p,\infty)\subseteq \bigcup_{i=1}^n R_i,
\]
then
\[
[p^\prime,\infty)\subseteq \bigcup_{i=1}^n R_i,
\]
where $p^\prime=\max\{p_1,\dots,p_n\}$.
\end{lemma}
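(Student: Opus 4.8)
The plan is to show that once we know the union of finitely many arithmetic progressions contains a tail $[p,\infty)$, the only way this can happen is that \emph{every} integer beyond the largest starting point $p'=\max\{p_1,\dots,p_n\}$ is covered. The key observation is that each progression $R_i$ is \emph{closed under adding} $q_i$ once we are past its own start: if $a\in R_i$ and $a\geq p_i$ then $a+q_i\in R_i$. More usefully, if I let $M=\lcm\{q_1,\dots,q_n\}$, then each $R_i$ (above its start) is closed under adding $M$, since $q_i\mid M$. So the set $\bigcup_i R_i$, restricted to $[p',\infty)$, is invariant under the shift $x\mapsto x+M$.

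The main step is to exploit this periodicity together with the given covering of $[p,\infty)$. First I would note we may assume $p\geq p'$, since enlarging $p$ only shrinks the hypothesis, and if $p<p'$ we simply replace $p$ by $p'$ (the tail $[p',\infty)\subseteq[p,\infty)$ is still covered). Now take any integer $x$ with $x\geq p'$. Choose $k\in\N_0$ large enough that $x+kM\geq p$; by hypothesis $x+kM\in R_i$ for some $i$. I claim this forces $x\in R_i$. Indeed, $x+kM\in R_i$ means $x+kM=p_i+tq_i$ for some $t\in\N_0$, and since $q_i\mid M$ we have $x=p_i+(t-k(M/q_i))q_i$. The only thing to check is that the coefficient $t-k(M/q_i)$ is a nonnegative integer, i.e.\ that subtracting $kM$ does not push us below the start $p_i$ of the progression. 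This is exactly where the bound $x\geq p'\geq p_i$ is used: since $x\geq p_i$, the representative $x=p_i+(x-p_i)/q_i\cdot q_i$ has a nonnegative coefficient \emph{provided} $q_i\mid(x-p_i)$, and the congruence $x\equiv p_i\pmod{q_i}$ follows from $x+kM\equiv p_i\pmod{q_i}$ together with $q_i\mid M$.

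I expect the main obstacle to be precisely this nonnegativity bookkeeping: one has to be careful that the backward shift from $x+kM$ to $x$ lands on a genuine term of $R_i$ (nonnegative index) rather than on a ``virtual'' term with negative index. The clean way to handle it is to work with congruences modulo $q_i$ rather than with explicit indices: membership $y\in R_i$ for $y\geq p_i$ is \emph{equivalent} to the single congruence $y\equiv p_i\pmod{q_i}$. So for $x\geq p'\geq p_i$, the condition $x\in R_i$ reduces to $x\equiv p_i\pmod{q_i}$, and this congruence is preserved under adding or subtracting $M$ because $q_i\mid M$. Thus $x+kM\in R_i$ (which holds for the chosen $i$ since $x+kM\geq p\geq p_i$) immediately gives $x\equiv p_i\pmod{q_i}$ and hence $x\in R_i$. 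As $x\geq p'$ was arbitrary, this yields $[p',\infty)\subseteq\bigcup_{i=1}^n R_i$, as required.
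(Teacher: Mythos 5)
Your proof is correct and rests on the same idea as the paper's: above $p'$, membership in each $R_i$ is shift-invariant (the paper phrases this as $m\in R_i \Leftrightarrow m+q_i\in R_i$ for $m\geq p'$, while you shift by $M=\lcm\{q_1,\dots,q_n\}$ and phrase it as the congruence $x\equiv p_i\pmod{q_i}$). You have simply made explicit the descent from $[p,\infty)$ to an arbitrary $x\geq p'$ that the paper's terse proof leaves to the reader.
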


\begin{proof}
If $p\leq p^\prime$ there is nothing to prove.
Otherwise the assertion follows from the fact that for every $m\geq p^\prime$ and every $i=1,\dots,n$
we have $m\in R_i$ if and only if $m+q_i\in R_i$.
\end{proof}

\begin{lemma}
\label{lemma9}
The set
\[
\{ F(a,x,b)\st a,b\in A,\ x\in S\}
\]
is finite.
\end{lemma}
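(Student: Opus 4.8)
The plan is to reduce the assertion to a finite counting problem. The two facts I would lean on are the uniform bound on the progression offsets supplied by Lemma~\ref{lemma7} and the containment $F\subseteq\{a,\dots,a^{p-1}\}$ supplied by Lemma~\ref{lemma5}\ref{T2}: together they trap every set $F(a,x,b)$ inside one fixed finite set, after which only a count of subsets remains.

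By Lemma~\ref{lemma7} the set $\{p(a,x,b)\st a,b\in A,\ x\in S\}$ is finite, so I may fix $P_0\in\N$ with $p(a,x,b)\leq P_0$ for every triple $(a,x,b)$ for which $T(a,x,b)$ is infinite, these being exactly the triples for which $F(a,x,b)$ and $p(a,x,b)$ are defined. For such a triple Lemma~\ref{lemma5}\ref{T2} gives
\[
F(a,x,b)\subseteq\{a,\dots,a^{p(a,x,b)-1}\}\subseteq\{a,\dots,a^{P_0-1}\}.
\]
Thus every $F(a,x,b)$ is a subset of the single finite set $\bigcup_{a\in A}\{a,\dots,a^{P_0-1}\}$, which has at most $n(P_0-1)$ elements. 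A family of subsets of a fixed finite set is itself finite, with at most $2^{n(P_0-1)}$ members, and the lemma follows.

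I expect no real obstacle inside Lemma~\ref{lemma9} itself: the substantive work was already done in Lemmas~\ref{lemma6} and~\ref{lemma7}, which pin down the common differences $q(a,x,b)$ and then the offsets $p(a,x,b)$; once $p$ is bounded, confining $F$ is immediate. For completeness I note that Lemma~\ref{lemma8} reaches the same bound from a slightly different angle. Fixing $a$ and $x$, Lemma~\ref{lemma4a} writes $N_a$ as the disjoint union of the blocks $T(a,x,b)$; deleting the finitely many progressions $P(a,x,b)$ leaves only the finite blocks and the finite sets $F(a,x,b)$, hence a finite set, so some tail $\{a^k\st k\geq K\}$ of $N_a$ is contained in $\bigcup_b P(a,x,b)$. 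Lemma~\ref{lemma8} then forces this covered tail to start no later than $a^{p'}$ with $p'=\max_b p(a,x,b)\leq P_0$, whence $\bigcup_b F(a,x,b)\subseteq\{a,\dots,a^{P_0-1}\}$, exactly as before.
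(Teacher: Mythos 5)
Your proof is correct, and its core is the same as the paper's: trap every $F(a,x,b)$ inside a single finite set determined by the uniform bound on the numbers $p(a,x,b)$ from Lemma \ref{lemma7}, then observe that a family of subsets of a fixed finite set is finite. The one real difference is in how the containment is obtained. You take it directly from Lemma \ref{lemma5}\ref{T2}: by construction $F(a,x,b)\subseteq\{a,\dots,a^{p(a,x,b)-1}\}\subseteq\{a,\dots,a^{P_0-1}\}$, so no covering argument is needed. The paper instead routes through Lemma \ref{lemma8}: it first notes that the progressions $P(a,x,b)$ eventually cover $N_a$ (Lemmas \ref{lemma4a}, \ref{lemma5}), and then uses Lemma \ref{lemma8} to conclude they cover the entire tail $\{a^t\st t\geq M\}$ with $M=\max_b p(a,x,b)$, whence each $F(a,x,b)\subseteq\{a,\dots,a^{M-1}\}$. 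Your shortcut shows that Lemma \ref{lemma8} is not actually needed for this step (the covering route proves the slightly stronger fact that nothing above $a^{M-1}$ escapes the progressions, which Lemma \ref{lemma9} does not require), and your closing remark faithfully reconstructs the paper's own argument, including the passage from ``some tail is covered'' to ``the tail starting at $\max_b p(a,x,b)$ is covered'' via Lemma \ref{lemma8}. You also handled the one bookkeeping subtlety correctly: $F(a,x,b)$ and $p(a,x,b)$ are defined only for triples with $T(a,x,b)$ infinite, and your quantification is restricted to exactly those triples.
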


\begin{proof}
Fix $a\in A$, $x\in S$.
Finitely many arithmetic progressions $P(a,x,b)$ ($b\in A$, $|T(a,x,b)|=\infty$)
eventually cover the block $N_a$ by Lemmas \ref{lemma4a}, \ref{lemma5}.
Hence, by Lemma \ref{lemma8}, they contain all elements $a^t$ with
\[
t\geq M=\max\{ p(a,x,b)\st b\in A\}.
\]
Hence every $F(a,x,b)$ ($b\in A$) is contained in $\{a,\dots,a^{M-1}\}$.
Since the numbers $p(a,x,b)$ are uniformly bounded by Lemma \ref{lemma7} the assertion follows.
\end{proof}

\begin{lemma}
\label{lemma10}
The set
\[
\{ T(a,x,b)\st a,b\in A,\ x\in S\}
\]
is finite.
\end{lemma}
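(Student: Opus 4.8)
The plan is to synthesise the three uniform finiteness results already established, namely Lemmas \ref{lemma6}, \ref{lemma7} and \ref{lemma9}, with the structural description of Lemma \ref{lemma5}. First I would dispose of the \emph{infinite} sets $T(a,x,b)$. By Lemma \ref{lemma5}, each such set satisfies $T(a,x,b)=F(a,x,b)\cupdot\{a^{p+qt}\st t\in\N_0\}$ with $p=p(a,x,b)$ and $q=q(a,x,b)$, so it is completely determined by the quadruple $(a,\,p(a,x,b),\,q(a,x,b),\,F(a,x,b))$. Since $A$ is finite, and since Lemmas \ref{lemma6}, \ref{lemma7} and \ref{lemma9} guarantee that $q(a,x,b)$, $p(a,x,b)$ and $F(a,x,b)$ respectively range over finite sets, there are only finitely many such quadruples, hence only finitely many distinct infinite sets $T(a,x,b)$.

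It then remains to handle the \emph{finite} sets $T(a,x,b)$. By the consequence of Lemmas \ref{lemma3} and \ref{lemma4} recorded after Lemma \ref{lemma4a}, any such set has at most one element; the empty set accounts for a single possibility, so the problem reduces to bounding the singletons $\{a^k\}$. Here I would fix $a\in A$ and $x\in S$, put $M=\max\{p(a,x,b)\st b\in A,\ |T(a,x,b)|=\infty\}$, and invoke the covering argument from the proof of Lemma \ref{lemma9} (via Lemmas \ref{lemma8}, \ref{lemma4a} and \ref{lemma5}): the infinite progressions $P(a,x,b)$ together contain every $a^t$ with $t\geq M$. Consequently, if some $T(a,x,b)=\{a^k\}$ is a singleton, then $a^k$ lies in none of the infinite blocks and so, by the disjointness in Lemma \ref{lemma4a}, we must have $k<M$. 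Since Lemma \ref{lemma7} bounds $M$ uniformly (indeed $p(a,x,b)\leq 2nQ$, independently of $a$, $b$, $x$), only finitely many exponents $k$ can occur, giving finitely many singletons.

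Combining the infinite case, the empty set and the singletons yields the finiteness of $\{T(a,x,b)\st a,b\in A,\ x\in S\}$. I do not expect a serious obstacle, since the substantive work is already contained in the uniform bounds of Lemmas \ref{lemma6}, \ref{lemma7} and \ref{lemma9}, and this lemma is largely a bookkeeping synthesis. The one point demanding care is the treatment of the finite sets: these are not captured by the quadruple parametrisation of Lemma \ref{lemma5} and must be bounded separately through the eventual-covering argument, so I would be careful to make explicit that a singleton cannot reach into the range $t\geq M$ that is already exhausted by the infinite arithmetic progressions.
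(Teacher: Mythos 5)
Your proposal is correct and is essentially the paper's argument: the paper proves Lemma \ref{lemma10} by a one-line citation of Lemmas \ref{lemma5}, \ref{lemma6}, \ref{lemma7} and \ref{lemma9}, and your parametrisation of the infinite sets by the quadruple $(a,p,q,F)$ is exactly the intended synthesis. Your explicit handling of the finite (at most singleton) sets via the eventual-covering bound $k<M\leq 2nQ$ fills in a detail the paper leaves implicit, and it is the right way to do so.
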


\begin{proof}
This follows from  Lemmas \ref{lemma5}, \ref{lemma6}, \ref{lemma7}, \ref{lemma9}.
\end{proof}

\section{Finite presentability}
\label{sec3}

Let $S$ be a semigroup, and let $A$ be a generating set for $S$.
Denote by $A^+$ the \emph{free semigroup} on $A$; it consists of all words over $A$.
Let $\epsilon$ denote the \emph{empty word}, and let $A^\ast=A^+\cup\{\epsilon\}$.
Since $A$ is a generating set for $S$, the identity mapping on $A$ induces an epimorphism
$\pi : A^+\rightarrow S$.
The kernel $\ker(\pi)$ is a congruence on $S$; if $R\subseteq A^+\times A^\plus$ is a generating set for this congruence we say that $\langle A\pst R\rangle$ is a presentation for $S$.
We say that $S$ \emph{satisfies a relation} $(u,v)\in A^+\times A^+$ if $\pi(u)=\pi(v)$; we write $u=v$ in this case.
Suppose we are given a set $R\subseteq A^+\times A^+$ and two words $u,v\in A^+$.
We say that the relation $u=v$ is a \emph{consequence} of $R$ if there exist words $u\equiv w_1,w_2,\dots, w_{k-1},w_k\equiv v$ 
($k\geq 1$) such that for each $i=1,\dots,k-1$ we can write
$w_i\equiv \alpha_i u_i \beta_i$ and $w_{i+1}\equiv \alpha_i v_i\beta_i$ where
$(u_i,v_i)\in R$ or $(v_i,u_i)\in R$.

It is well known that the following are equivalent:
\begin{enumerate}[label=\textsf{(P\arabic*)}, leftmargin=*,widest=2]
\item
\label{P1}
$\langle A\pst R\rangle$ is a presentation for $S$.
\item
\label{P2}
$S$ satisfies all relations from $R$, and every relation that $S$ satisfies is a consequence of $R$.
\item
\label{P3}
$S$ satisfies all relations from $R$, and
there exists a set $W\subseteq A^+$ such that $\pi$ maps $W$ injectively into $S$, so that for every $u\in A^+$ there exists $w\in W$ such that $u=w$ is a consequence of $R$.
\end{enumerate}
[\ref{P1}$\Leftrightarrow$\ref{P2} is \cite[Proposition 1.4.2]{lallement}.
\ref{P2}$\Rightarrow$\ref{P3} is proved by choosing a single preimage for every $s\in S$, and letting the resulting set be $W$.
\ref{P3}$\Rightarrow$\ref{P2} is obvious.]
The set $W$ in \ref{P3} is referred to as a set of \emph{normal forms} for elements of $S$.

We are now ready to prove the finitely presented part of the Main Theorem.

\begin{thm}
\label{thm21}
Every semigroup which is a disjoint union of finitely many copies of the free monogenic semigroup is finitely presented.
\end{thm}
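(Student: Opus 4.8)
The plan is to verify the presentation criterion \ref{P3}. I would take the finite set $A$ as generating set; since each $N_a=\langle a\rangle$, this is indeed a generating set, and the natural set of normal forms is
\[
W=\{a^k\st a\in A,\ k\in\N\},
\]
which $\pi$ maps bijectively onto $S$, because the blocks $N_a$ are pairwise disjoint and each is free monogenic. In particular $\pi|_W$ is injective, so by \ref{P3} it remains only to exhibit a \emph{finite} set $R$ of relations, all satisfied by $S$, such that every word over $A$ can be rewritten to a member of $W$ using consequences of $R$.

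For the relations I would take $R=R_1\cup R_2$, where $R_1$ records a bounded initial part of the ``multiplication table'' and $R_2$ encodes the periodicity. Put $Q=\max\{q(c,y,d)\st c,d\in A,\ y\in S\}$ and $M=2nQ$, both finite by Lemmas \ref{lemma6} and \ref{lemma7}. For all $c,b\in A$ and every $m$ with $1\le m\le M$ the product $c^m b$ represents some element $d^l\in W$; I would let $R_1$ consist of all the relations $c^m b=d^l$. For every triple $c,b,d\in A$ with $T(c,b,d)$ infinite, with parameters $p=p(c,b,d)$, $q=q(c,b,d)$ and $c^p b=d^r$, $c^{p+q}b=d^{r+s}$, I would let $R_2$ contain the relation $c^q d^r=d^{r+s}$; note this is exactly \eqref{eq3}, hence satisfied by $S$. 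Finiteness of $R_1$ is immediate, and finiteness of $R_2$ follows from Lemmas \ref{lemma6}, \ref{lemma7}, \ref{lemma9}, \ref{lemma10}, which bound all of the occurring parameters.

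I would then prove, by induction on length, that every $u\in A^+$ equals a member of $W$ as a consequence of $R$. Writing $u=vb$ with $v$ already reduced to a single power $c^m$, it suffices to rewrite $c^m b$ to normal form. If $m\le M$ this is a single application of the appropriate relation of $R_1$. If $m>M$, the crucial point is that $c^m$ then lies in an infinite progression $P(c,b,d)$ whose base satisfies $p\le M$: this is where Lemmas \ref{lemma7}, \ref{lemma8}, \ref{lemma9} are used, guaranteeing uniformly in $c,b$ that the infinite progressions cover all exponents beyond $M$. Hence $m=p+qt$ with $t\ge 1$, and the target is $\pi(c^m b)=d^{r+st}$. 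I would rewrite $c^m b\equiv c^{qt}(c^p b)$, apply the base relation $c^p b=d^r$ of $R_1$ to the suffix to obtain $c^{qt}d^r$, and then apply the pumping relation $c^q d^r=d^{r+s}$ (post-multiplied by suitable powers of $d$) a total of $t$ times, successively turning $c^{q(t-i)}d^{r+is}$ into $c^{q(t-i-1)}d^{r+(i+1)s}$ until every $c$ has been absorbed and only $d^{r+st}\in W$ remains. This completes the induction, and the theorem then follows from \ref{P3}.

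The substantive obstacle is not this rewriting argument, which is routine once the relations are in hand, but rather the \emph{uniform finiteness} of $R$: one must know that only finitely many periods $q(c,b,d)$, pre-periods $p(c,b,d)$, and finite remainder sets $F(c,b,d)$ occur, so that a single bound $M$ works for all $c,b$ and both $R_1$ and $R_2$ are finite. This is precisely the content of Lemmas \ref{lemma6}, \ref{lemma7}, \ref{lemma9}, \ref{lemma10}, so the hard part of the argument has in effect already been carried out in Section \ref{sec2}.
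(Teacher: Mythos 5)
Your proposal is correct: the relations you write down are satisfied by $S$, the set $R_1\cup R_2$ is finite, and the letter-by-letter induction together with the covering claim (every exponent $m>M$ lies in some progression $P(c,b,d)$ with base $p\leq M$, which does follow from Lemma \ref{lemma7} and the Lemma \ref{lemma8}/Lemma \ref{lemma9} covering argument) yields a valid rewriting of every word to normal form, so \ref{P3} applies. However, you take a genuinely different, and substantially heavier, route than the paper. The paper's proof of Theorem \ref{thm21} fixes one triple $(a,b,c)$ at a time and uses \emph{only} Lemma \ref{lemma5}: for each triple, the infinitely many relations $a^{p+tq}b=c^{r+ts}$ are shown to be consequences of just the two base relations \eqref{eq22} (one rewrites $a^{q}c^{r}c^{st}$ by replacing the subword $c^r$ backwards through $a^pb=c^r$ and then applying $a^{p+q}b=c^{r+s}$ forwards; this is the ``closer inspection'' of the proof of Lemma \ref{lemma4}), leaving only the finitely many $F$-relations; global finiteness then comes for free from finiteness of $A$, since there are at most $|A|^3$ triples, each contributing finitely many relations. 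Consequently no uniform bound is ever needed, whereas you invoke Lemmas \ref{lemma6} and \ref{lemma7} to build $M=2nQ$, and Lemmas \ref{lemma8}, \ref{lemma9} for the uniform covering. What your version buys is a clean, explicitly bounded rewriting algorithm (all table lookups confined to exponents $\leq M$, with the pumping relations $c^qd^r=d^{r+s}$, i.e.\ \eqref{eq3}, added as axioms rather than derived); what it costs is the impression, stated in your final paragraph, that uniform finiteness is ``the hard part'' of finite presentability. It is not: the paper reserves Lemmas \ref{lemma6}--\ref{lemma10} for residual finiteness in Section \ref{sec4}, and in your own argument the finiteness of $R_2$ is trivial (at most one relation per triple, so at most $|A|^3$ relations) --- Lemmas \ref{lemma9} and \ref{lemma10} are not actually needed for it, and Lemma \ref{lemma10} plays no role anywhere in your proof. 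A per-triple variant of your rewriting, resting on Lemma \ref{lemma5} alone, would prove the theorem with far less machinery.
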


\begin{proof}
We continue using notation from Section \ref{sec2}.
Thus $S=\bigcupdot_{a\in A} N_a$, and $N_a=\langle a\rangle$.
The set
\[
W=\{ a^k\st a\in A,\ k\in\N\}
\]
is a set of normal forms for $S$.
Hence for any $a,b\in A$ and $k,l\in\mathbb{N}$ there exist unique
$\alpha(a,k,b,l)\in A$ and $\kappa(a,k,b,l)\in\N$ such that
\begin{equation}
\label{eq1a}
a^kb^l=[\alpha(a,k,b,l)]^{\kappa(a,k,b,l)}.
\end{equation}
It is easy to see that generators $A$ and relations \eqref{eq1a} provide an (infinite) presentation for $S$;
for instance, condition \ref{P3} is clearly satisfied.

Now we claim that the (still infinite) presentation with generators $A$ and relations
 \begin{equation}
 \label{eq4a}
a^k b=[\alpha(a,k,b,1)]^{\kappa(a,k,b,1)},\  (a,b\in A,\ k\in \N)
 \end{equation}
 also defines $S$.
Indeed, the above set of relations is  contained in \eqref{eq1a}, and so $S$ satisfies \eqref{eq4a}. 
We now show that a general relation 
from \eqref{eq1a} is a consequence of \eqref{eq4a}. We do this by induction on $l$.
For $l=1$ we actually have a relation from \eqref{eq4a}, and there is nothing to prove.
Assume the assertion holds for some $l$. Then we have
\[
\begin{array}{lll}
a^k b^{l+1} &\equiv a^kb^l b& \\
&=[\alpha(a,k,b,l)]^{\kappa(a,k,b,l)} b &\text {(by induction)}\\
&\multicolumn{2}{l}{= [\alpha(\alpha(a,k,b,l),\kappa(a,k,b,l),b,1)]^{\kappa(\alpha(a,k,b,l),\kappa(a,k,b,l),b,1)}}\\
&&\text {(by \eqref{eq4a})}\\
&\equiv [\alpha(a,k,b,l+1)]^{\kappa(a,k,b,l+1)}.&\text{(by uniqueness of normal forms)}  
\end{array}
\]
 Therefore, every relation \eqref{eq1a} is a consequence of \eqref{eq4a}.
Since \eqref{eq1a} is a presentation for $S$, so is \eqref{eq4a}.

For any $a,b,c\in A$ consider the set $T(a,b,c)$.
Note that for every $a^i\in T(a,b,c)$ there exists a unique $j\in\N$ such that $a^ib=c^j$.
Let $R_{a,b,c}$ be the set of all these relations; clearly $|R_{a,b,c}|=|T(a,b,c)|$.

Next we claim that for any $a,b,c\in A$
there exists a finite set of relations $R_{a,b,c}^\circ\subseteq R_{a,b,c}$ such that all relations in
$R_{a,b,c}$ are consequences of $R_{a,b,c}^\circ$.
Indeed, if $T(a,b,c)$ is finite (i.e. $|T(a,b,c)|\leq 1$) the assertion is obvious. 
So suppose that $T(a,b,c)$ is infinite.
By Lemma \ref{lemma5} we have
\[
T(a,b,c)=F\cup P,
\]
where $P=\{ a^{p+tq}\st t\in\N_0\}$ and $F\subseteq\{a,\dots,a^{p-1}\}$.
Now, if
\begin{equation}
\label{eq22}
a^pb=c^r,\ a^{p+q}b=c^{r+s},
\end{equation}
then by Lemma \ref{lemma4} we have
\begin{equation}
\label{eq23}
a^{p+tq}b=c^{r+ts}\ (t\in\N_0).
\end{equation}
A closer inspection of the proof of Lemma \ref{lemma4} shows that in fact relations \eqref{eq23} are consequences
of \eqref{eq22}, in the technical sense above.
On the other hand, relations \eqref{eq23} are precisely all the relations $a^ib=c^j$ with $a^i\in P$.
There remain finitely many relations with $a^i\in F$, and the claim follows.

To complete the proof of the theorem, note that the set of defining relations \eqref{eq4a} is the union
$\bigcup_{a,b,c\in A} R_{a,b,c}$.
Hence all these relations are consequences of $\bigcup_{a,b,c\in A}R_{a,b,c}^\circ$,
which is a finite set because $A$ and all $R_{a,b,c}^\circ$ are finite.
\end{proof}

\section{Residual finiteness}
\label{sec4}

A semigroup $S$ is said to be \emph{residually finite} if for any two distinct elements $s, t\in S$ there exists a homomorphism $\phi$ from $S$ into a finite semigroup such that $\phi(s)\neq\phi(t)$.
It is well known that the following are equivalent:

\begin{enumerate}[label=\textsf{(RF\arabic*)}, leftmargin=*,widest=2]
\item
\label{RF1}
$S$ is residually finite.
\item
\label{RF2}
There exists a congruence $\rho$ of \emph{finite index} (i.e. with only finitely many equivalence classes)
such that $(s,t)\not\in\rho$.
\item
\label{RF3}
There exists a right congruence $\rho$ of finite index such that $(s,t)\not\in\rho$.
\end{enumerate}
[\ref{RF1}$\Leftrightarrow$\ref{RF2} is an immediate consequence of the connection between homomorphisms and congruences via kernels.
\ref{RF2}$\Rightarrow$\ref{RF3} is trivial.
\ref{RF3}$\Rightarrow$\ref{RF2} follows from the fact that for a right congruence $\rho$ of
finite index, the largest two-sided congruence contained in $\rho$ also has finite index; see
\cite[Theorem 2.4]{ruskuc98}.]

In this section we prove the residual finiteness part of the Main Theorem, i.e. we prove that every semigroup
which is a disjoint union of finitely many copies of the free monogenic semigroup is residually finite.
So, let $S$ be such a semigroup, and let all the notation be as in Section \ref{sec2}.
Define a relation $\rho$ on $S$ as follows:
\[
(x,y)\in\rho \Leftrightarrow  (\forall z\in S^1)(\exists a\in A)(xz,yz\in N_a).
\]
Intuitively, two elements $(x,y)$ of $S$ are $\rho$-related if every pair of translates by the same element of $S^1$ belongs
 to a single block. In particular, if $(x,y)\in\rho$ then $x$ and $y$ are powers of the same generator $a\in A$, i.e.
\begin{equation}
\label{eq4aa}
\rho\subseteq \bigcup_{a\in A} N_a\times N_a.
\end{equation}
The following is obvious from the definition:

\begin{lemma}
\label{lemma4c}
$\rho$ is a right congruence.
\end{lemma}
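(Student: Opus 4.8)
The statement to prove is Lemma \ref{lemma4c}, asserting that the relation
\[
(x,y)\in\rho \Leftrightarrow  (\forall z\in S^1)(\exists a\in A)(xz,yz\in N_a)
\]
is a right congruence.

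The plan is to verify directly that $\rho$ is an equivalence relation and that it is right-compatible with multiplication, since these are exactly the defining properties of a right congruence. I would first dispense with the equivalence axioms. Reflexivity is immediate: for any $x$ and any $z\in S^1$, the single element $xz$ lies in exactly one block $N_a$, so $(x,x)\in\rho$. Symmetry is manifest from the symmetric role of $x$ and $y$ in the condition. For transitivity, suppose $(x,y)\in\rho$ and $(y,w)\in\rho$; fixing an arbitrary $z\in S^1$, there is a block $N_a$ containing both $xz$ and $yz$, and a block $N_b$ containing both $yz$ and $wz$. Since $yz$ lies in only one block, we have $a=b$, whence $xz,wz\in N_a$ and $(x,w)\in\rho$.

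The right-compatibility is the only point requiring a moment's thought, but it is nearly as direct. I would show that $(x,y)\in\rho$ implies $(xu,yu)\in\rho$ for every $u\in S$. Fix such a $u$ and an arbitrary test element $z\in S^1$. The key observation is that $(xu)z=x(uz)$ and $(yu)z=y(uz)$, and $uz$ is itself an element of $S$ (hence of $S^1$). Applying the defining condition for $(x,y)\in\rho$ with the test element $uz\in S^1$ in place of $z$, we obtain a block $N_a$ with $x(uz),y(uz)\in N_a$, i.e. $(xu)z,(yu)z\in N_a$. As $z$ was arbitrary, this establishes $(xu,yu)\in\rho$.

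There is no real obstacle here; the lemma is labelled ``obvious'' in the excerpt precisely because the universal quantifier over $S^1$ in the definition of $\rho$ is engineered to absorb right translation automatically, the substitution $z\mapsto uz$ reducing compatibility to the defining condition itself. The only mild subtlety worth flagging is the use of $S^1$ rather than $S$ in the quantifier: this ensures reflexivity is checked against $z=1$ as well (giving $x,y\in N_a$ for some common $a$, consistent with \eqref{eq4aa}), and it is harmless for compatibility since $uz\in S\subseteq S^1$ whenever $u\in S$. Note that $\rho$ need not be a two-sided congruence, which is why the statement claims only the one-sided property; this is exactly why the subsequent argument relies on the equivalence \ref{RF3}$\Rightarrow$\ref{RF2} to pass from a right congruence of finite index to a two-sided one.
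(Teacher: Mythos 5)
Your proof is correct, and it is exactly the direct verification the paper has in mind: the paper offers no written proof, declaring the lemma ``obvious from the definition,'' and your argument---equivalence axioms via disjointness of the blocks $N_a$, plus right compatibility via the substitution $z\mapsto uz$ with $uz\in S\subseteq S^1$---is precisely the routine check being omitted.
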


An alternative description of $\rho$ is provided by:
\begin{equation}
\label{eq31}
(a^i,a^j)\in\rho \Leftrightarrow (\forall x\in S)(\exists b\in A)(a^i,a^j\in T(a,x,b));
\end{equation}
the proof is obvious.
This description enables us to prove:

\begin{lemma}
\label{lemma32}
$\rho$ has finite index.
\end{lemma}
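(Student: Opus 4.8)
The plan is to show that the $\rho$-class of any element $a^i \in N_a$ is determined by a finite amount of data, so that there can be only finitely many classes. The natural piece of data to attach to $a^i$ is the function that records, for each generator $x \in S$, which block $a^i$ is sent into — equivalently, the unique $b \in A$ with $a^i \in T(a,x,b)$. By the alternative description \eqref{eq31}, two elements $a^i, a^j \in N_a$ are $\rho$-related precisely when, for every $x \in S$, they lie in the same set $T(a,x,b)$. So the first step is to reformulate $\rho$ in terms of membership in the sets $T(a,x,b)$.

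The key observation is then that the $\rho$-class of $a^i$ is completely determined by the \emph{family of sets} $\{\,T(a,x,b) : x \in S,\ b \in A,\ a^i \in T(a,x,b)\,\}$, or more economically by the map $x \mapsto T(a,x,\cdot)$ restricted to its effect on $a^i$. Here is where the work of the whole of Section \ref{sec2} pays off: by Lemma \ref{lemma10} there are only \emph{finitely many distinct sets} $T(a,x,b)$ altogether, as $x$ ranges over all of $S$ and $a,b$ over $A$. I would fix $a \in A$ and consider the finite collection $\mathcal{T}_a = \{\,T(a,x,b) : x \in S,\ b \in A\,\}$, which is finite. Each $a^i \in N_a$ determines a subset of $\mathcal{T}_a$, namely the collection of those members of $\mathcal{T}_a$ that contain $a^i$; call this the \emph{trace} of $a^i$.

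The heart of the argument is that \emph{$a^i$ and $a^j$ are $\rho$-related if and only if they have the same trace}. Indeed, if they have the same trace, then for every $x \in S$ the unique block $b$ with $a^i \in T(a,x,b)$ coincides with the one for $a^j$ (the sets $T(a,x,\cdot)$ partition $N_a$ by Lemma \ref{lemma4a}, so membership in the right one is detected by the trace), giving $(a^i,a^j) \in \rho$ via \eqref{eq31}; the converse is immediate. Since $\mathcal{T}_a$ is finite, it has only finitely many subsets, hence only finitely many possible traces, hence only finitely many $\rho$-classes inside each block $N_a$. Summing over the finitely many $a \in A$, and invoking \eqref{eq4aa} to guarantee that distinct blocks contribute distinct classes, we conclude that $\rho$ has finitely many classes in total.

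I expect the main obstacle to be purely bookkeeping: making the passage from ``same trace'' to ``$\rho$-related'' airtight, since the trace is phrased in terms of which \emph{sets} contain $a^i$ while $\rho$ is phrased in terms of which \emph{block} $b$ the product $a^i x$ lands in. These agree because, for fixed $a$ and $x$, the distinct sets $T(a,x,b)$ (over $b \in A$) are disjoint, so knowing the full collection of containing sets pins down the block. The entire analytic difficulty — that these sets stabilise into arithmetic progressions and that only finitely many occur — has already been discharged in Lemma \ref{lemma10}, so nothing further of substance is needed here.
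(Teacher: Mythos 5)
Your proof is correct and takes essentially the same approach as the paper: the paper uses \eqref{eq31} to write the $\rho$-class of $a^i$ as the intersection $\bigcap\{\,T(a,x,b)\st x\in S,\ b\in A,\ a^ix\in N_b\,\}$ --- which is exactly the intersection of the sets in your ``trace'' --- and then concludes via Lemma \ref{lemma10} that only finitely many such intersections exist. Your trace-equality formulation versus the paper's intersection formulation is only a bookkeeping difference; both hinge on precisely \eqref{eq31} and Lemma \ref{lemma10}.
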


\begin{proof}
From \eqref{eq31} it follows that the $\rho$-class of an element $a^i\in S$ is
\begin{equation}
\label{eq33}
a^i\rho = \bigcap \{ T(a,x,b)\st x\in S,\ b\in A,\ a^ix\in N_b\}.
\end{equation}
By Lemma \ref{lemma10} there are only finitely many distinct sets $T(a,x,b)$.
Hence there are only finitely many intersections \eqref{eq33}, and the assertion follows.
\end{proof}

For each $a\in A$, consider the restriction
\[
\rho_a = \rho |_{N_a}.
\]
From Lemmas \ref{lemma4c}, \ref{lemma32} it follows that $\rho_a$ is a right congruence of finite index on $N_a$.
But $N_a$, being free monogenic, is commutative, and so $\rho_a$ is actually a congruence.
Furthermore, congruences on a free monogenic semigroup are well understood, and we have that
\[
\rho_a=\{ (a^i,a^j)\st i=j \mbox{ or } 
(i,j\geq p_a\ \&\ i\equiv j\pmod{q_a} ) \},
\]
for some $p_a,q_a\in \N$; see \cite[Section 1.2]{howie95}.

Motivated by this, for any pair $(x,y)\in \rho$ we define their \emph{distance} as
\[
d(x,y)=\frac{|i-j|}{q_a} \ \ \mbox{if } x=a^i,\ y=a^j.
\]

\begin{lemma}
\label{lemma41}
If $x,y,z\in S$ are such that $(x,y)\in\rho$ and $x\neq y$ then
\[
d(x,y) \mid d(xz,yz).
\]
\end{lemma}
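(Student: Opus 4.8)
The plan is to reduce everything to a single ``$q_a$-step'' and then propagate it with Lemma~\ref{lemma4}. Write $x=a^i$ and $y=a^j$. Since $(x,y)\in\rho$ and $x\neq y$, the explicit description of $\rho_a$ forces $i,j\geq p_a$ and $q_a\mid(j-i)$, and I may assume $i<j$, so that $j-i=q_a\,d$ with $d=d(x,y)$. Because $\rho$ is a right congruence (Lemma~\ref{lemma4c}), we have $(xz,yz)\in\rho$, so by \eqref{eq4aa} both $xz$ and $yz$ lie in a common block $N_b$; write $xz=b^r$ and $yz=b^s$, so that $d(xz,yz)=|r-s|/q_b$ is a well-defined nonnegative integer.

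Next I would focus on the single consecutive pair $a^i,a^{i+q_a}$, which is again $\rho$-related (it lies in $\rho_a$). Multiplying on the right by $z$ gives $a^iz=b^r$ and $a^{i+q_a}z=b^{r+s_1}$ for some $s_1\in\N_0$, where nonnegativity of $s_1$ comes from Lemma~\ref{lemma3}. Applying Lemma~\ref{lemma4} with $p=i$, $q=q_a$ and increment $s_1$ then yields $a^{i+q_a t}z=b^{r+s_1 t}$ for all $t\in\N_0$; evaluating at $t=d$ gives $yz=a^jz=b^{r+s_1 d}$, whence $|r-s|=s_1 d$.

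The crux of the argument is that the per-step increment $s_1$ is itself divisible by $q_b$. This follows once more from $\rho$ being a right congruence: since $(a^i,a^{i+q_a})\in\rho$, we obtain $(b^r,b^{r+s_1})\in\rho_b$, and the explicit form of $\rho_b$ forces either $s_1=0$ or $r,r+s_1\geq p_b$ with $q_b\mid s_1$; in both cases $q_b\mid s_1$. Consequently
\[
d(xz,yz)=\frac{|r-s|}{q_b}=\frac{s_1 d}{q_b}=d\cdot\frac{s_1}{q_b},
\]
and since $s_1/q_b\in\N_0$ this is an integer multiple of $d=d(x,y)$, giving $d(x,y)\mid d(xz,yz)$.

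I expect the main obstacle to be recognising that one must work with the single $q_a$-step rather than with the full span $j-i$. Applying Lemma~\ref{lemma4} directly across the whole gap merely reproduces the two endpoints and yields only $q_b\mid s_1 d$, which is too weak to conclude divisibility by $d$. Isolating the elementary step exposes the stronger divisibility $q_b\mid s_1$ furnished by the right-congruence property, and this is exactly what upgrades the \emph{a priori} fact $d(xz,yz)\in\N_0$ to the desired divisibility by $d$.
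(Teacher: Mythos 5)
Your proof is correct and follows essentially the same route as the paper's: isolate the single $q_a$-step $(a^i,a^{i+q_a})\in\rho$, use the right-congruence property together with the explicit form of $\rho_b$ to see that the resulting increment is a multiple of $q_b$, and propagate across the full gap with Lemma~\ref{lemma4}. The only (cosmetic) difference is that the paper splits off the case $xz=yz$ and invokes Lemma~\ref{lemma35} to get a strictly positive increment, whereas you allow $s_1=0$ and let the divisibility hold trivially, which is if anything slightly cleaner.
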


\begin{proof}
Since $\rho$ is a right congruence we have $(xz,yz)\in\rho$, and so 
$d(xz,yz)$ is defined.
If $xz=yz$ there is nothing to prove, so suppose $xz\neq yz$.
Write
\[
x=a^r,\ y=a^s,
\]
where $r,s\geq p_a$, $r\equiv s\pmod{q_a}$, $r\neq s$.
Without loss of generality assume $s>r$ so that $s=r+tq_a$ for some $t\in\N$.
Notice that $(a^r,a^{r+q_a})\in \rho$; furthermore we must have
$a^rz\neq a^{r+q_a}z$ by Lemma \ref{lemma35}.
Therefore
\begin{equation}
\label{eq42}
a^rz=b^u,\ a^{r+q_a}z=b^v,
\end{equation}
for some $u,v\geq p_b$, $u\equiv v\pmod{q_b}$, $u<v$.
Write $v=u+wq_b$, $w\in\N$.
Equalities \eqref{eq42} become
\[
a^rz=b^u,\ a^{r+q_a}z=b^{u+wq_b},
\]
and Lemma \ref{lemma4} yields
\[
a^{r+tq_a}z=b^{u+twq_b}.
\]
Therefore
\[
d(xz,yz) = d(a^rz,a^{r+tq_a}z)=d(b^u,b^{u+twq_b})
= tw=w d(a^r,a^{r+tq_a})=w d(x,y),
\]
as required.
\end{proof}

We are now ready to prove:

\begin{thm}
\label{thm61}
Every semigroup which is a disjoint union of finitely many copies of the free monogenic semigroup is residually finite.
\end{thm}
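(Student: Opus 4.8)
The plan is to invoke the criterion \ref{RF3}: to prove residual finiteness it suffices, for every pair of distinct elements $s,t\in S$, to exhibit a right congruence of finite index that does not relate them. A first candidate is $\rho$ itself, which is a right congruence by Lemma~\ref{lemma4c} and has finite index by Lemma~\ref{lemma32}; so if $(s,t)\notin\rho$ we are already done. The entire difficulty is therefore to separate two \emph{distinct} but $\rho$-related elements. By \eqref{eq4aa} and the explicit description of $\rho_a$, such elements lie in a common block, say $s=a^i$, $t=a^j$ with $i\neq j$, $i,j\geq p_a$ and $i\equiv j\pmod{q_a}$; put $N=d(s,t)=|i-j|/q_a\in\N$.

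To separate $s$ and $t$ I would refine $\rho$ within each block by a common factor. Choose an integer $\ell\geq 2$ with $\ell\nmid N$ (for instance $\ell=N+1$), and let $\sigma$ be the relation that agrees with the diagonal across distinct blocks and on each $N_a$ is the free monogenic congruence with parameters $(p_a,q_a\ell)$:
\[
(a^i,a^j)\in\sigma\iff i=j\ \text{ or }\ \bigl(i,j\geq p_a\ \text{and}\ i\equiv j\pmod{q_a\ell}\bigr).
\]
Because the blocks are disjoint this $\sigma$ is an equivalence relation; it is contained in $\rho$ since $q_a\mid q_a\ell$; and it has finite index, each block contributing only $p_a-1+q_a\ell$ classes. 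Moreover $\sigma$ does not relate $s$ and $t$: from $i-j=\pm Nq_a$ we see that $i\equiv j\pmod{q_a\ell}$ would force $\ell\mid N$, contrary to the choice of $\ell$.

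The one substantial step — and precisely the reason Lemma~\ref{lemma41} was established — is that $\sigma$ is a \textbf{right congruence}. To check this, suppose $(x,y)\in\sigma$ and $z\in S^1$; the case $x=y$ is trivial, so take $x=a^r$, $y=a^s$ with $r,s\geq p_a$, $r\equiv s\pmod{q_a\ell}$ and $r\neq s$. Then $(x,y)\in\rho$, so $(xz,yz)\in\rho$ by Lemma~\ref{lemma4c}; if $xz=yz$ we are done, and otherwise $xz=b^u$, $yz=b^{u'}$ with $u,u'\geq p_b$, $u\equiv u'\pmod{q_b}$, so $d(xz,yz)=|u-u'|/q_b$ is defined. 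The congruence $r\equiv s\pmod{q_a\ell}$ says exactly that $\ell\mid d(x,y)$, while Lemma~\ref{lemma41} gives $d(x,y)\mid d(xz,yz)$; hence $\ell\mid d(xz,yz)$, that is $q_b\ell\mid u-u'$, i.e. $u\equiv u'\pmod{q_b\ell}$. Thus $(xz,yz)\in\sigma$, as required. Consequently $\sigma$ is a finite-index right congruence separating $s$ and $t$, and residual finiteness follows by \ref{RF3}$\Rightarrow$\ref{RF1}. Note that $\ell$ is allowed to depend on the pair $(s,t)$, which is all the pointwise criterion demands; the only genuine work is the congruence verification above, everything else being routine bookkeeping about congruences on the free monogenic semigroup.
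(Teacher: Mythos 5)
Your proof is correct, and it reaches the paper's conclusion by a genuinely different mechanism, although both arguments rest on the same scaffolding: the finite-index right congruence $\rho$ (Lemmas \ref{lemma4c}, \ref{lemma32}), the parametrisation of each $\rho_a$ by $(p_a,q_a)$, and the divisibility Lemma \ref{lemma41}. The paper takes $\sigma$ to be the right congruence \emph{generated} by the finite set $G=\{(a^{p_a},a^{p_a+2dq_a}) \st a\in A\}$ with $d=d(x,y)$; this makes the right-congruence property automatic, but the paper must then deduce finite index from the nontriviality of the restriction of $\sigma$ to each block, and must rule out $(x,y)\in\sigma$ by unwinding the elementary-sequence description of generated right congruences, applying Lemma \ref{lemma41} to each elementary step to get $2d\mid d(u_i,u_{i+1})$ and hence the contradiction $2d\mid d$. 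You instead write the refinement down \emph{explicitly} --- on each block the monogenic congruence with parameters $(p_a,q_a\ell)$ where $\ell\nmid N$ --- which makes finite index and the separation of $s$ and $t$ immediate arithmetic, and shifts all the substance into the direct verification that this explicit relation is closed under right translation. That verification is sound: it is exactly one application of Lemma \ref{lemma41} (from $\ell\mid d(x,y)$ and $d(x,y)\mid d(xz,yz)$ you get $q_b\ell\mid u-u'$), combined with the observation that $(xz,yz)\in\rho$ and $xz\neq yz$ force $u,u'\geq p_b$ and $u\equiv u'\pmod{q_b}$ via the description of $\rho_b$, and you handle the degenerate cases ($x=y$, $xz=yz$, $z=1$) correctly. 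The two choices of modulus play identical roles: your $\ell$ (e.g.\ $\ell=N+1$) fails to divide $N=d(s,t)$, just as the paper's $2d$ fails to divide $d$. On balance your route is arguably cleaner, since it avoids the elementary-sequence argument for generated right congruences entirely, at the modest cost of checking the right-congruence axiom by hand; the paper's route buys that axiom for free but pays for it in the separation step.
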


\begin{proof}
Let $S$ be such a semigroup, with all the foregoing notation remaining in force.
Let $x,y\in S$ be two arbitrary distinct elements.
By \ref{RF3} it is sufficient to prove that $x$ and $y$ are separated by a right congruence of finite index.
If $(x,y)\not\in\rho$ then $\rho$ is such a congruence by Lemmas \ref{lemma4c}, \ref{lemma32}.
So suppose $(x,y)\in\rho$, say with $x,y\in N_b$, and let
\[
d(x,y)=d>0.
\]
Let $\sigma$ be the right congruence on $S$ generated by the set
\[
G=\{ (a^{p_a},a^{p_a+2dq_a}) \st a\in A\}.
\]
Clearly $\sigma$ is a refinement of $\rho$ (i.e. $\sigma\subseteq\rho$).
Notice that $G$ contains one pair of distinct elements from each block $N_a$.
Hence the restriction of $\sigma$ to each $N_a$ is a non-trivial congruence, and so has finite index.
Therefore $\sigma$ itself has finite index too.

We claim that $(x,y)\not\in\sigma$.
Suppose otherwise; this means that there is a sequence
\[
x=u_1,u_2,\dots,u_m=y
\]
of elements of $S$, such that for each $i=1,\dots,m-1$ we can write
\[
u_i=v_iz_i,\ u_{i+1}=w_iz_i,
\]
for some $v_i,w_i\in S$, $z_i\in S^1$, satisfying $(v_i,w_i)\in G$ or $(w_i,v_i)\in G$.
(This is a well known general fact; see for example \cite[Section 8.1]{howie95} .)
Without loss of generality we may assume that all $u_i$ are distinct.
From $\sigma\subseteq\rho$ it follows that all $u_i$ belong to the block $N_b$,
say
\[
u_i = b^{s_i} \ (i=1,\dots,m).
\]
By definition of $G$ and Lemma \ref{lemma41} we have that 
$2d\mid d(u_i,u_{i+1})$ for all $i=1,\dots,m-1$.
This is equivalent to
\[
s_i\equiv s_{i+1} \pmod{2q_b d}\ (i=1,\dots,m-1),
\]
from which it follows that $s_1\equiv s_m\pmod{2q_bd}$,
and hence $2d\mid d(x,y)=d$, a contradiction.
\end{proof}

\section{Concluding remarks}
\label{sec5}

Arguably, the free monogenic semigroup $N$ is the most fundamental commutative semigroup.
It is well known that all finitely generated commutative semigroups are finitely presented 
and
residually finite. 
Finite presentability was first proved by R\'{e}dei \cite{redei65}; see also \cite[Section 9]{grillet95}.
Residual finiteness was proved by Malcev \cite{malcev58}; see also \cite{carlisle71,lallement71}.
In this paper we have shown that disjoint unions of copies of $N$ 
(which, of course, need not be commutative) in this respect behave like commutative semigroups.
It would be interesting to know if this generalises to unions of commutative semigroups:

\begin{question}
Is it true that every semigroup which is a finite disjoint union of finitely generated commutative semigroups is
necessarily: (a) finitely presented; (b) residually finite?
\end{question}

By way of contrast, there is no reason to believe that our results would generalise to disjoint unions of copies of a free (non-commutative) semigroup of rank $>1$.

\begin{question}
Does there exist a semigroup $S$ which is a disjoint union of two copies of a free semigroup of rank $2$ which is not: (a) finitely presented; (b) residually finite?
\end{question}

Finally, it would be interesting to know how the subsemigroups of semigroups investigated in this paper behave.
Since residual finiteness is preserved under taking substructures, they are certainly all residually finite.
Also, they are all finitely generated, which follows from the observation that all subsemigroups of $N$ are finitely generated.

\begin{question}
Is every subsemigroup of every semigroup which is a disjoint union of finitely many copies of the free monogenic semigroup finitely presented?
\end{question}

\begin{flushleft}
School of Mathematics and Statistics\\
University of St Andrews\\
St Andrews KY16 9SS\\
Scotland, U.K.\\
\smallskip
\texttt{\{nabilah,nik\}@mcs.st-and.ac.uk}
\end{flushleft}

\end{document}